\DeclareMathAlphabet{\mathpzc}{OT1}{pzc}{m}{it}
\newtheorem{theorem}{Theorem}[section]
\newtheorem{proposition}[theorem]{Proposition}
\newtheorem{lemma}[theorem]{Lemma}
\newtheorem*{theorem*}{Theorem}
\newtheorem*{proposition*}{Proposition}
\newtheorem*{corollary*}{Corollary}
\newtheorem*{lemma*}{Lemma}
\newtheorem*{conjecture*}{Conjecture}
\theoremstyle{definition}
\newtheorem{definition}[theorem]{Definition}
\newtheorem*{definition*}{Definition}
\theoremstyle{remark}
\newtheorem{examples}[theorem]{Examples}
\newtheorem{remark}[theorem]{Remark}
\newtheorem{remarks}[theorem]{Remarks}
\newtheorem*{example*}{Example}
\newtheorem*{examples*}{Examples}
\newtheorem*{remark*}{Remark}
\newtheorem*{remarks*}{Remarks}
\newtheorem*{exercise*}{Exercise}
\newcommand\da{\!\downarrow\!}
\newcommand\ra{\rightarrow}
\newcommand\la{\leftarrow}
\newcommand\ten{\otimes}
\newcommand\vareps{\varepsilon}
\renewcommand\H{\mathrm{H}}
\newcommand\z{\mathrm{Z}}
\newcommand\Z{\mathbb{Z}}
\newcommand\R{\mathbb{R}}
\newcommand\Cx{\mathbb{C}}
\newcommand\vv{\mathbb{V}}
\newcommand\bA{\mathbb{A}}
\newcommand\bF{\mathbb{F}}
\newcommand\bG{\mathbb{G}}
\newcommand\bH{\mathbb{H}}
\newcommand\bP{\mathbb{P}}
\newcommand\bR{\mathbb{R}}
\newcommand\bT{\mathbb{T}}
\newcommand\C{\mathcal{C}}
\newcommand\cH{\mathcal{H}}
\newcommand\cS{\mathcal{S}}
\renewcommand\O{\mathscr{O}}
\newcommand\sA{\mathscr{A}}
\newcommand\sB{\mathscr{B}}
\newcommand\sG{\mathscr{G}}
\newcommand\sO{\mathscr{O}}
\newcommand\sP{\mathscr{P}}
\newcommand\sQ{\mathscr{Q}}
\newcommand\sU{\mathscr{U}}
\newcommand\sV{\mathscr{V}}
\newcommand\Def{\mathfrak{Def}}
\newcommand\fM{\mathfrak{M}}
\newcommand\fN{\mathfrak{N}}
\newcommand\fS{\mathfrak{S}}
\newcommand\fT{\mathfrak{T}}
\renewcommand\L{\Lambda}
\newcommand\m{\mathfrak{m}}
\newcommand\Ho{\mathrm{Ho}}
\newcommand\Alg{\mathrm{Alg}}
\newcommand\Aut{\mathrm{Aut}}
\newcommand\im{\mathrm{Im\,}}
\newcommand\Ob{\mathrm{Ob}\,}
\newcommand\mal{\mathrm{Mal}}
\newcommand\univ{\mathrm{univ}}
\newcommand\Spec{\mathrm{Spec}\,}
\newcommand\Set{\mathrm{Set}}
\newcommand\Grp{\mathrm{Grp}}
\newcommand\Aff{\mathrm{Aff}}
\newcommand\ad{\mathrm{ad}}
\newcommand\into{\hookrightarrow}
\newcommand\xra{\xrightarrow}
\newcommand\pr{\mathrm{pr}}
\newcommand\dmd{\diamond}
\newcommand\bt{\bullet}
\newcommand\by{\times}
\newcommand\mcl{\mathrm{MC}_L}
\newcommand\mc{\mathrm{MC}}
\newcommand\Rees{\mathrm{Rees}}
\newcommand\SL{\mathrm{SL}}
\newcommand\GL{\mathrm{GL}}
\newcommand\an{\mathrm{an}}
\newcommand\pd{\partial}
\newcommand\dc{d^{\mathrm{c}}}
\newcommand\half{\frac{1}{2}}
\newcommand\MHS{\mathrm{MHS}}
\newcommand\MTS{\mathrm{MTS}}
\newcommand\ugr{\underline{\mathrm{gr}}}
\newcommand\gpd{\mathrm{Gpd}}
\newcommand\Gpd{\mathrm{Gpd}}
\newcommand\red{\mathrm{red}}
\newcommand\row{\mathrm{row}}
\newcommand\dR{\mathrm{dR}}
\newcommand\oR{\mathbf{R}}
\begin{document}
\title{Hodge structures on analytic moduli of real pluriharmonic bundles}
\author{J.P.Pridham\thanks{This work was supported by Trinity College, Cambridge; and the Engineering and Physical Sciences Research Council [grant number  EP/F043570/1].}}
\maketitle

\begin{abstract}
We define and construct the real analytic moduli stack of pluriharmonic bundles on a compact K\"ahler manifold $X$, and show how this is equipped with Hodge and quaternionic structures. This stack maps to the de Rham moduli stack, giving rise to preferred sections of the Deligne-Hitchin twistor stack. We then show how the non-abelian mixed Hodge structures on Malcev homotopy types $X$ can be extended to  objects over the pluriharmonic moduli stack.
\end{abstract}

\tableofcontents

\section*{Introduction}
\addcontentsline{toc}{section}{Introduction}

This paper is a sequel to \cite{mhs}, in which we showed how to put mixed Hodge and mixed twistor structures on schematic (or, more generally, relative Malcev) homotopy types of compact K\"ahler manifolds. 

In \cite{Sim1} and \cite{Sim2}, Simpson defined the coarse  Betti, de Rham and Dolbeault  moduli spaces of a smooth projective complex variety. These  are all algebraic spaces, and he constructed a complex analytic isomorphism between the Betti and de Rham moduli spaces, then a homeomorphism between the de Rham and Dolbeault moduli spaces. The key to the latter was the correspondence between semisimple local systems, pluriharmonic bundles and Higgs bundles.

The de Rham moduli stack $\fM_{\dR,X,n}$ parametrises pairs $(\sV, \nabla)$ on $X$, where $\sV$ is a complex rank $n$ $\C^{\infty}$ vector bundle on $X$, and $\nabla$ a flat connection. This is a complex analytic stack, equipped with an antiholomorphic involution $\tau$, corresponding to complex conjugation of $\sV$. It extends to a stack $\fT_{\dR,X,n}$ over $\Cx\ten_{\R}\Cx-\{0\}$, 
which we will call the de Rham twistor stack. The fibre over $\alpha \ten 1 + \beta \ten i$ is given by pairs $(\sV, \nabla)$, where we now take $\nabla$ to be a flat $(\alpha d +\beta \dc)$-connection.
Thus the fibre of $\fT_{\dR,X,n}$ over $1\ten 1$ is just $\fM_{\dR,X,n}$. 

Moreover, the stack $\fT_{\dR,X,n}$ has an analytic action of $(\Cx\ten_{\R}\Cx)^* $ over $\Cx\ten_{\R}\Cx-\{0\}$, with $(\alpha \ten 1 + \beta \ten i)\dmd(\sV, \nabla):= (\sV,\alpha \nabla + \beta\ad_J\nabla)$. As explained in Remark \ref{twmhs}, this means that $\fT_{\dR,X,n}$ is a non-abelian Hodge filtration on   $\fM_{\dR,X,n}$. Moreover, taking the quotient by $\Cx^*$ gives us a stack $[\fT_{\dR,X,n}/\Cx^*]$ over $\bP^1(\Cx)\cong (\Cx\ten_{\R}\Cx-\{0\})/\Cx^*$. This is precisely the  Deligne-Hitchin twistor stack, which is usually defined in terms of gluing rather than as a quotient (Remark \ref{cfdh}).

The pluriharmonic moduli stack $\fM_{\cH,X,n}$ is defined over $\fM_{\dR,X,n}$, with the fibre over $(\sV, \nabla)$ parametrising hermitian metrics on $\sV$ with respect to which the connection $\nabla$ is pluriharmonic. This stack is real (not complex) analytic, and has an analytic action of the circle group $U_1$ (Definition \ref{clubdef}), which we can regard as giving it a pure Hodge structure of weight $0$. The pluriharmonic twistor stack   $\fT_{\cH,X,n}$ is an extension of $\fM_{\cH,X,n}$  over  the non-zero quaternions $\bH^*$. This stack admits a compatible multiplication by $\bH^*$, making it isomorphic to the trivial bundle $\fM_{\cH,X,n}\by \bH^*$.

There is a morphism 
$\Upsilon:\fT_{\cH,X,n}\to \fT_{\dR,X,n}$ of stacks given by forgetting the pluriharmonic structure, and this lies over a standard isomorphism $\Cx\ten_{\R}\Cx \cong \bH$. Thus $\fM_{\cH,X,n}$ gives sections of the map $\fT_{\dR,X,n}\to\Cx\ten_{\R}\Cx-\{0\}$.
Many of the known structures on the Deligne-Hitchin twistor space come from combining the actions of  $(\Cx\ten_{\R}\Cx)^*$ and $\bH^*$ with the injectivity of $\Upsilon$ on objects (see Remark \ref{cfsimpson} for one example).

Rather than just working with vector bundles, we generalise to work with $R$-bundles, with $R$ any real reductive pro-algebraic group (or groupoid) equipped with a Cartan involution. It then makes sense to consider relative Malcev homotopy types as in \cite{htpy}. There is a natural object over the stack $\fM_{\dR,X,R}$, whose fibre over the point $[\rho]$ (associated to a representation $\rho: \pi_fX \to R(\Cx)$ of the fundamental groupoid) is the relative Malcev homotopy type $X^{\rho, \mal}$. We denote the pullback of this universal object to $\fM_{\cH, X, R}$ by $X^{R, \univ}$, and show that this admits a natural algebraic mixed twistor structure  (Proposition \ref{univmts}). We then show that this can be enhanced to an analytic mixed Hodge structure (Proposition \ref{univmtsen}), extending the weight $0$ Hodge structure on  $\fM_{\cH, X, R}$. Taking the fibre at a point $[\rho]$ yields the mixed twistor and mixed Hodge structures on $X^{\rho, \mal}$ of \cite{mhs} (Remarks \ref{cfmhs}).

The structure of the paper is as follows.

In Sections \ref{dR} and \ref{tdR}, we define the de Rham  moduli and twistor stacks $\fM_{\dR,X,R}$ and $\fT_{\dR,X,R}$ of $R$-bundles on $X$, and construct the $(\Cx\ten_{\R}\Cx)^* $-action on the twistor stack. Sections \ref{cH} and \ref{tcH} contain the pluriharmonic analogues with these, including the $\bH^*$-action on $\fT_{\dR,X,n}$, together with properties of the forgetful functor $\Upsilon:\fT_{\cH,X,R}\to \fT_{\dR,X,R}$. In Section \ref{local}, the local structures of the twistor stacks are described, including that singularities of $\fT_{\dR,X,R}$ in the image of $\Upsilon$ are all quadratic  (Proposition \ref{cfgm}), which is a generalisation of  Goldman and Millson's corresponding result (\cite{GM}) for $\fM_{\dR,X,R}$.

The universal Malcev homotopy type $X^{R, \univ}$ over $\fM_{\cH,X,R} $ is defined in Section \ref{relhtpy}, and endowed with an algebraic mixed twistor structure. In Section \ref{u1}, this is enhanced to an analytic mixed Hodge structure. These both split on tensoring with the weight $0$ algebra $\cS:=\R[x]$ when the Hodge filtration on $\cS \ten \Cx$ is given by powers of $(x-i)$  (Propositions \ref{formalityuniv} and \ref{formalityuniv2})

We will adopt the terminology and notation of \cite{mhs} without further comment.
%
Throughout this paper, a real analytic stack will be understood to mean an Artin stack on the real analytic site, in the sense of \cite{Noohi1}. 

\section{The de Rham moduli stack}\label{dR}

Fix a compact K\"ahler manifold $X$, and a real pro-algebraic groupoid $R$.

We define the de Rham moduli stack $\fM_{\dR,X,R}$ to be the  real  analytic stack 
parametrising flat connections on principal complex $\C^{\infty}$ $R$-bundles on $X$. Explicitly:

\begin{definition}
Given a sheaf $\sG$ of groupoids on a topological space $X$, define a principal $\sG$-bundle
to consist of a pair $(f,\sP)$, for $f\in\Gamma(X,\Ob\sG)$ a global section of the object set, and $\sP$ a principal $\sG(f,f)$-bundle.  A morphism from $(f,\sP)$ to $(f',\sP')$ consists of  a section $ \alpha\in\Gamma(X,\sG(f,f'))$ together with an isomorphism $\beta:\sP\alpha\to\sP'$   of $\sG(f',f')$-bundles.  Note that if $\sG$ is a group, this recovers the usual definition, and that if $\sG$ is a constant sheaf, then the groupoid of bundles is equivalent, but not isomorphic, to the category of functors from $ \pi_fX  $ to $\sG$.
 \end{definition}

\begin{definition}
 Given a real analytic space  $U$ and a real manifold $Y$, define  the sheaf $\sA^0_{Y\by U/U}$ to consist of $\C^{\infty}$ functions on $Y\by U$, analytic along $U$. It is clear that this is well-defined when $U$ is smooth; if not, take a closed immersion $i:U \into S$, for $S$ smooth,  and set $\sA^0_{Y\by U/U}:=i^{-1} \sA^0_{Y\by S/S} \subset \sA^0_{Y\by U}$. Now set  $\sA^n_{Y\by U/U}:= \sA^0_{Y\by U/U}\ten_{ \sA^0_{Y}}\sA^n_{Y}$.
\end{definition}

\begin{definition}
 Given a 
 real analytic space  $U$, define $\fM_{\dR,X,R}(U)$ to be the groupoid with objects $(\sP,\nabla)$, for
 $\sP$ a principal $R(\sA^0_{X\by U/U}\ten \Cx)$-bundle  on $X$, and 
$$  
\nabla : \sP\to\ad \sP\ten_{\sA^0_{X\by U/U}}\sA^1_{X\by U/U},
$$ 
which is a connection in the sense that $\nabla(p\cdot g)= \ad_g \nabla(p) + g^{-1}\cdot dg$, for $z,z' \in \Ob R, p \in \sP(z), g \in R(\sA^0_{X\by U/U})(z,z')$. We also require that $\nabla$ be flat, i.e. that
 $$
d\circ\nabla =0:\sP \to  \ad \sP\ten_{\sA^0_{X\by U/U}}\sA^2_{X\by U/U}.
 $$

An isomorphism from $(\sP, \nabla)$ to $(\sP', \nabla')$ is an isomorphism $f: \sP \to \sP'$, such that $\nabla \circ f= \ad f \circ \nabla'$. 

Define the involution $\tau$ on  $\fM_{\dR,X,R}$ to be given by complex conjugation of $R(\Cx)$.
\end{definition}

\begin{lemma} \label{anal1}
Suppose that $U,W$ are open balls in $\R^n$ centred at the origin, and $S$  a real analytic subspace of an open disc $V$ in $\R^N$, with all embedded components passing through a point $s \in S$. If $\sP$ is an $R(\sA^0_{U\by W\by S/S})$-torsor on $U\by W \by S$, equipped with a flat connection
$$
\nabla: \sP \to (\ad \sP) \ten_{\sA^0_{U}} \sA^1_U,
$$
then
$$
\nu_S: \H^0(U\by W \by S, \sP)^{\nabla}\to \H^0(\{0\}\by W \by S, \sP|_{\{0\}\by W \by S })
$$
is an isomorphism.
\end{lemma}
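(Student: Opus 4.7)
The plan is to invert $\nu_S$ by solving the parallel transport ODE along $U$. The crucial observation is that $\nabla$ differentiates only in the $U$-direction, so for each fixed $(w, s) \in W\by S$ the restriction of $(\sP, \nabla)$ to $U\by\{(w, s)\}$ is a flat $R$-bundle on the contractible ball $U$, on which parallel transport from the origin is globally well defined and provides a unique trivialisation by flat sections with prescribed value at $0$.

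Injectivity of $\nu_S$ is then immediate: if $\sigma$ is $\nabla$-flat and $\sigma|_{\{0\}\by W\by S}=0$, then for each $(w,s)$ the restriction $\sigma|_{U\by\{(w,s)\}}$ is a flat section of a flat bundle on the ball $U$ vanishing at the origin, which must vanish identically by ODE uniqueness along radial lines.

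For surjectivity, given $\tau$ I propose to define $\sigma(u,w,s)$ pointwise as the parallel transport of $\tau(w,s)$ along the straight ray from $(0,w,s)$ to $(u,w,s)$. The substantive content is checking that $\sigma$ lies in $\sA^0_{U\by W\by S/S}$, i.e.\ is $C^\infty$ in $(u,w)$ and real analytic along $S$. This is local, so near any $(u_0, w_0, s_0)$ I trivialise $\sP$ and write $\nabla = d_U + A$, with $A$ a $1$-form along $U$ valued in $\ad R$ and with coefficients in $\sA^0_{U\by W\by S/S}$. Using the definition of the latter sheaf as $i^{-1}\sA^0_{\bullet/V}$, and the hypothesis that every embedded component of $S$ passes through $s$ (which is what makes the required lifts available near $s$), after shrinking I lift $A$ to $\tilde A$ and $\tau$ to $\tilde\tau$ with coefficients in $\sA^0_{U\by W\by V/V}$. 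Solving the linear ODE
\[
d_U\tilde\sigma + \tilde A\,\tilde\sigma = 0,\qquad \tilde\sigma|_{\{0\}\by W\by V} = \tilde\tau
\]
along rays in $U$ from the origin, the standard parameter dependence for linear ODEs with $C^\infty$ parameters in $(u,w)$ and real analytic parameters in $v$ yields a unique solution $\tilde\sigma \in \sA^0_{U'\by W'\by V'/V'}$. Flatness of $A$ on $S$ ensures that the restriction to $S$ is $\nabla$-flat, and gives the desired local section.

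The local flat sections agree on overlaps by applying the injectivity argument to their difference, and therefore glue to a global $\nabla$-flat section $\sigma$ over $U\by W\by S$ with $\nu_S(\sigma)=\tau$. The main obstacle is the lifting step, which is why one needs to descend to the smooth ambient $V$ before invoking parameter dependence for linear ODEs; once this lift is in hand the rest is routine.
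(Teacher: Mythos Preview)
Your approach is correct and shares the broad strategy with the paper --- pass to the smooth ambient $V$ to manufacture the required regularity, then restrict back to $S$ --- but the execution differs. The paper proceeds by induction on $\dim U$: writing $U=U'\times U''$ with $\dim U'=1$ and absorbing $U''$ into $W$, it reduces surjectivity to the one-dimensional case. There the flatness condition on $\nabla$ is vacuous, so $(\sP,\nabla)$ extends without any integrability constraint to $(\tilde\sP,\tilde\nabla)$ on $U\times W\times V$, and the known smooth result for $\nu_V$ applies directly. The payoff of this reduction is that one never confronts an overdetermined system on $V$: the lifted connection is automatically flat, and one simply invokes the smooth case as a black box.

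Your argument instead lifts $A$ to $\tilde A$ with $\dim U$ arbitrary. Since $\tilde A$ need not be flat on $V$, the displayed system $d_U\tilde\sigma+\tilde A\,\tilde\sigma=0$ is generally overdetermined there; what you actually solve is only its radial component, and you then rely on flatness of $A$ on $S$ to conclude that the restriction $\sigma$ is flat in all $U$-directions (radial parallel transport from $0$ on a simply connected ball with a flat connection recovers the unique global flat section). This is correct but should be stated more carefully. One further point: you describe the lift of $A$ and $\tau$ as local near an arbitrary $(u_0,w_0,s_0)$, yet the radial ODE you then solve runs from the origin $0\in U$, so the lift must in fact cover the whole segment $[0,u_0]$. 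A compactness argument along the ray, or a global-in-$U$ lift as the paper effectively uses after its dimension reduction, handles this, but it deserves a sentence. The paper's inductive route sidesteps both of these bookkeeping issues at once.
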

\begin{proof}
 If $S$ is smooth, then the result is well known. In particular it is true when $S$ is a point, and hence for an Artinian scheme (replacing $\sP$ by $\pr_{1*}\sP$). The argument of \cite{Sim2} Lemma 7.4 adapts to give injectivity of $\nu$.

For surjectivity, we may assume that $U=U'\by U''$, with $\dim U'=1$. Replacing $U$ with $U'$ and $W$ with $W':=U''\by W$, and arguing inductively, we see that it suffices to consider the case $\dim U=1$. The flatness condition is then vacuous, allowing us to extend $(\sP, \nabla)$ to $(\tilde{\sP}, \tilde{\nabla})$ on $U \by W \by V$ (similarly to \cite{Sim2} Lemma 7.4). Since $\nu_V$ is known to be an isomorphism, this gives the required surjectivity of $\nu_S$.
\end{proof}

\begin{proposition}\label{analytic}
For any contractible 
real analytic space $U$, there is an equivalence of categories between the groupoid of $R_{\Cx}(U)$-representations of $\pi_1(X)$, and $\fM_{\dR,X,R}(U)$. Moreover, $R(U)$-representations  correspond to the $\tau$-invariant locus of $\fM_{\dR,X,R}(U)$.    
\end{proposition}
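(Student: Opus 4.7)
The plan is the classical Riemann--Hilbert correspondence, with analytic dependence on $U$ coming from Lemma \ref{anal1}. I would construct mutually inverse functors between the two groupoids. In one direction, given $\rho: \pi_1(X) \to R_\Cx(U)$, form the associated locally constant sheaf $\sV_\rho$ on $X$ of $R_\Cx(U)$-torsors (well-defined up to isomorphism, since $U$ contractible makes every $R_\Cx(U)$-torsor on $U$ trivial), then extend the structure group along $R_\Cx(U) \into R(\sA^0_{X\by U/U}\ten\Cx)$ to obtain a principal bundle $\sP_\rho$, equipped with the flat connection $\nabla_\rho$ whose horizontal sections are $\sV_\rho$. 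Functoriality in $\rho$ is immediate.

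For the inverse direction, given $(\sP,\nabla) \in \fM_{\dR,X,R}(U)$, consider the sheaf $\sP^\nabla$ on $X$ of $\nabla$-horizontal sections. Applying Lemma \ref{anal1} locally on $X$ (with the lemma's $U\by W$ playing the role of a coordinate ball $B \subset X$, possibly split to do induction on $\dim X$, and the lemma's $S$ playing the role of our base $U$ via a local closed embedding of $U$ into Euclidean space), the restriction map from $\Gamma(B\by U,\sP)^\nabla$ to the fiber of $\sP$ at any chosen point of $B$ is a bijection of $R_\Cx(U)$-torsors. Hence $\sP^\nabla$ is a locally constant sheaf of $R_\Cx(U)$-torsors on $X$, and its monodromy --- a genuine representation rather than a twisted form, again because $U$ is contractible --- yields the desired $\rho_{(\sP,\nabla)}$.

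Mutual inverseness is essentially tautological: $\sP_\rho^{\nabla_\rho} \cong \sV_\rho$ has monodromy $\rho$, while the canonical evaluation map $\sP^\nabla \by^{R_\Cx(U)} R(\sA^0_{X\by U/U}\ten\Cx) \to \sP$ is an isomorphism intertwining the natural connection on the source with $\nabla$, which is again a restatement of Lemma \ref{anal1}. Compatibility with morphisms in both groupoids follows from the same argument applied to horizontal sections of $\ad\sP \to \ad\sP'$. Finally, $\tau$ acts by complex conjugation on $R(\Cx)$, so $\tau$-fixed objects are exactly those for which the cocycle defining $(\sP,\nabla)$ takes values in the real form $R(\sA^0_{X\by U/U})$; under the equivalence, this corresponds to monodromy factoring through $R(U) \subset R_\Cx(U)$.

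The only genuinely nontrivial step is ensuring analytic, rather than merely $\C^\infty$, dependence on $U$ of the horizontal trivialisations of $\sP$ --- which is precisely what Lemma \ref{anal1} is designed to provide. Everything else is a parametrised version of the standard Riemann--Hilbert argument, and the groupoid case (if $R$ has several objects) is handled identically after replacing $\pi_1(X)$ by the fundamental groupoid $\pi_fX$.
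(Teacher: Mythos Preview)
Your proposal is correct and follows essentially the same route as the paper: both send $(\sP,\nabla)$ to its sheaf of horizontal sections $\ker\nabla$ (an $R(\O_U\ten\Cx)$-torsor on $X$, locally trivial by Lemma~\ref{anal1}) and send a torsor $P$ back to $P\by^{R(\O_U\ten\Cx)}R(\sA^0_{X\by U/U}\ten\Cx)$ with its tautological connection, with the identification of local systems and $\pi_1$-representations using contractibility of $U$. The paper simply cites \cite{Sim2} Theorem 7.1 for the standard Riemann--Hilbert template and names Lemma~\ref{anal1} as the substitute for Simpson's Lemma 7.4, which is exactly the role you have assigned it; one small correction is that in the lemma it is $U$ alone (not $U\by W$) that carries the connection and plays the role of a ball in $X$, with $W$ present only to absorb the inductive step in the lemma's own proof.
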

\begin{proof}
This is really just the observation that $R(U)$-representations of $\pi_1(X)$ correspond to $R(\O_U)$-torsors on $X \by U$. The proof now proceeds as for \cite{Sim2} Theorem 7.1,  substituting Lemma \ref{anal1} for \cite{Sim2} Lemma 7.4.  Given a connection     $(\sP,\nabla)$,  we have an $R(\O_U)$-torsor $\ker \nabla$. To an $R(\O_U)$-torsor $P$, we associate the pair  $(P\by^{R(\O_U)}R(\sA^0_{X\by U/U}),\nabla)$, where  $\nabla$  is given by $  \nabla(p,g)= g^{-1}\cdot dg$, for $z,z' \in \Ob R, p \in P(z), g \in R(\sA^0_{X\by U/U})(z,z')$.
 \end{proof}

\begin{definition}
Given a point $\rho \in \fM_{\dR,X,R}(\R)$, say that $\rho$ is stable if the corresponding representation is pro-reductive, and let the stable de Rham stack $\fM_{\dR,X,R}^s \subset \fM_{\dR,X,R}$ be the (open) substack on stable points.
\end{definition}

\section{The de Rham twistor stack}\label{tdR} 
    
\begin{definition}\label{cdef}
Define $C$ to be the real affine scheme $\prod_{\Cx/\R}\bA^1$ obtained from $\bA^1_{\Cx}$ by restriction of scalars, so for any real algebra $A$, we have $C(A)= \bA^1_{\Cx}(A\ten_{\R}\Cx)\cong A\ten_{\R}\Cx$. We let $C^*$ be the quasi-affine scheme $C - \{0\}$.

Define $S$ to be the real algebraic group  $\prod_{\Cx/\R} \bG_m$ obtained as in \cite{Hodge2} 2.1.2 from $\bG_{m,\Cx}$ by restriction of scalars. Note that there is a canonical inclusion $\bG_m \into S$, and that   $S$ acts on $C$ and $C^*$
by inverse multiplication, i.e.
\begin{eqnarray*}
S \by C &\to& C\\
(\lambda, w) &\mapsto& (\lambda^{-1}w).
\end{eqnarray*}
\end{definition}

Choosing $i \in \Cx$ gives an isomorphism $C \cong \bA^2_{\R}$, giving co-ordinates $u,v$ on $C$ so that the isomorphism $A \by A \cong A\ten_{\R}\Cx$ is written  $(u,v) \mapsto u\ten 1 +v\ten i$. Thus the algebra $O(C)$ associated to $C$ is the polynomial ring $C=\R[u,v]$. $S$ is isomorphic to the scheme $\bA^2_{\bR} -\{(u,v)\,:\, u^2+v^2=0\}$.

\begin{definition}\label{scoords}
Let $C^*(\Cx)$ be the real analytic space of complex-valued points  of $C^*$ (isomorphic to $\Cx\ten_{\R}\Cx -\{0\}$). On $C^*(\Cx)$, we denote the complex co-ordinates $(u,v)$ by $(\alpha, \beta)$, corresponding to $\alpha\ten 1 +\beta\ten i\in \Cx\ten_{\R}\Cx$. 

$C^*$ has an involution $\tau$ given by complex conjugation of the co-ordinates $\Cx$, so $\tau(\alpha, \beta)= (\bar{\alpha},\bar{\beta})$.

There is an isomorphism $C(\Cx)\cong\Cx^2-\{0\}$ given by $(\alpha, \beta)\mapsto (\alpha + i\beta, \alpha - i\beta)$. Under this isomorphism, $S(\Cx) \subset C(\Cx)$ maps to $\Cx^* \by \Cx^*$.
\end{definition}

We will now  define the twistor stack $\fT_{\dR,X,R}$  to be the analytic stack over $C^*(\Cx)$ whose fibre over $(\alpha,\beta)$ classifies flat $\alpha d +\beta d^c$-connections on principal $\C^{\infty}$ $R$-bundles on $X$. The explicit construction follows.
  
\begin{definition}
 Given a 
 real analytic space  $U$ over $C^*(\Cx) $, define $\fT_{\dR,X,R}(U)$ to be the groupoid with objects $(\sP,\nabla)$, for
 $\sP$ a principal $R(\sA^0_{X\by U/U})$-bundle  on $X$, and 
$$  
\nabla : \sP\to\ad \sP\ten_{\sA^0_{X\by U/U}}\sA^1_{X\by U/U},
$$ 
which is an $\alpha d +\beta d^c$-connection in the sense that $\nabla(p\cdot g)= \ad_g \nabla(p) + g^{-1}\cdot (\alpha d +\beta d^c)g$, for $z,z' \in \Ob R, p \in \sP(z), g \in R(\sA^0_{X\by U/U})(z,z')$. We also require that $\nabla$ be flat, i.e. that
 $$
 (\alpha d +\beta d^c)\circ\nabla =0:\sP \to  \ad \sP\ten_{\sA^0_{X\by U/U}}\sA^2_{X\by U/U}.
 $$

An isomorphism from $(\sP, \nabla)$ to $(\sP', \nabla')$ is an isomorphism 
$$
f: \sP\by^{R(\sA^0_{X\by U/U})}R(\sA^0_{X\by U/U}\ten\Cx) \to \sP'\by^{R(\sA^0_{X\by U/U})}R(\sA^0_{X\by U/U}\ten\Cx)
$$
 of $R(\sA^0_{X\by U/U}\ten\Cx)$-bundles, such that $\nabla \circ f= \ad f \circ \nabla'$. 
 
  Define the involution $\tau$ on  $\fT_{\dR,X,R}$ to be given by complex conjugation, lifting $\tau$ on $C^*(\Cx)$.
  \end{definition}
\begin{remarks}
 Note that we may extend this definition to give a stack over $C(\Cx)$, but that this would not be analytic (i.e. not have a presentation), since the fibre over $0$ is too large.
\end{remarks}

Note that $\fM_{\dR,X,R}=\fT_{\dR,X,R}\by_{C^*(\Cx)}(1,0)$. 

\begin{definition}\label{saction}
We may define a  real analytic  $\Cx^*\by \Cx^*\cong S(\Cx)$-action on $\fT_{\dR,X,R}(U)$ over $C^*(\Cx)$ by $(\alpha+i \beta,\alpha -i\beta)\dmd (\sP,\nabla):=(\sP, \alpha\nabla+\beta J\bar{\nabla})$. Here $(\alpha, \beta)$ are the standard co-ordinates on $C$, as in Definition \ref{scoords}. 
This is $\tau$-equivariant in the sense that $\tau((\lambda, \mu)\dmd y)= (\bar{\mu}, \bar{\lambda}) \dmd \tau(y)$, since $\tau(\alpha+i \beta,\alpha -i\beta)= (\bar{\alpha}+i \bar{\beta},\bar{\alpha} +i\bar{\beta})$.
 \end{definition}

\begin{remark}\label{cfdh}
In fact, $\fT_{\dR,X,R}$ is the real analytic stack underlying a complex analytic stack, since the constructions above can all be defined for complex analytic spaces, with $\tau$ becoming an antiholomorphic involution. We have concentrated on the real structure to facilitate comparisons in the rest of the paper. 

There is a canonical embedding $\bG_m \into S$, corresponding on $\Cx$-valued points to the diagonal inclusion $\Cx^* \into \Cx^*\by \Cx^*$ when making use of the isomorphism in Definition \ref{saction}. Since $[C^*/\bG_m] \cong \bP^1$, this gives us a morphism $[\fT_{\dR,X,R}/\Cx^*] \to \bP^1(\Cx)$ of real analytic stacks. If $R = \GL_n$, then the coarse moduli space associated to this stack is precisely the Deligne-Hitchin twistor space, as constructed in \cite{hitchin} and described in \cite{simpsonwgt2} \S 3. 
\end{remark}

\begin{remark}\label{twmhs}
In \cite{mhs}, an algebraic Hodge filtration on an object $Z$ was defined to be an $S$-equivariant extension of $X$ over $C^*$. We may therefore regard $\fT_{\dR,X,R}$ as a kind of Hodge filtration on $\fM_{\dR,X,R}$.
\end{remark}

\section{The pluriharmonic moduli stack}\label{cH}

\begin{definition}
Define an involution $C$ on a real pro-algebraic groupoid $R$ to be a Cartan involution if for all $x,y\in\Ob R$, ${C\tau}$ is is  complex conjugation  with respect to a compact real form of the complex scheme $R_{\Cx}(x,y)$, where $\tau$ denotes complex conjugation. Note that this extends the standard definition (from e.g. \cite{Simpson} \S 4) for pro-algebraic groups.
\end{definition}

\begin{examples}
\begin{enumerate}
\item $\GL_n$ has a Cartan involution, given by $C(A)=(A^{\top})^{-1}$.

\item By \cite{Simpson} Theorem 7, $\varpi_1(X,x)_{\R}^{\red}$ has a Cartan involution, corresponding to $-1 \in U_1$ for the unitary action on Higgs bundles. More generally,
 if   $\rho:\pi_fX \to R(\R)$ is any Zariski-dense morphism, then $R$ is a quotient of $\varpi_f(X)^{\red}$, and $C$ descends to $R$ (by Tannakian duality, since $C(V)\cong V^{\vee}$), so the pair $(R,C)$ satisfies the conditions above.
\end{enumerate}
\end{examples}

From now on, assume that the real pro-algebraic groupoid $R$ is  equipped with a Cartan involution $C$.  

For an  $R(\sA^0\ten \Cx)$-torsor $\sP$, observe that the set of principal  $R_{\Cx}^{C\tau}(\sA^0)$-subbundles $\sQ$ of $\sP$ with $\sP = \sQ\by^{R_{\Cx}^{C\tau}(\sA^0)}R_{\Cx}(\sA^0)$  is isomorphic to the set of global sections $s$ of $\sP/R_{\Cx}^{C\tau}(\sA^0)$. Under this correspondence, $\sQ$ is the inverse image of $s$, while $s$ is the image of $\sQ$. There is then an action of $C\tau$ on $\sP$, given by $C\tau(q, r)= (q, C\tau(r))$, for $r\in R(\sA^0)$. This action is $(R(\sA^0\ten \Cx), C\tau)$-equivariant, in the sense that $C\tau(p \cdot r)= C\tau(p) \cdot C\tau(r)$.

\begin{definition}
Given an  $R(\sA^0\ten \Cx)$-torsor $\sP$ on a  K\"ahler manifold $X$, equipped with a global section $s$ of $\sP/R_{\Cx}^{C\tau}(\sA^0)$, and a flat connection $D: \sP \to (\ad \sP)\ten_{\sA^0}\sA^1$,
define the $d^c$-connection $D^c$ to be  $ J \ad_{C\tau} D: \sP\to (\ad \sP)\ten_{\sA^0}\sA^1$, for $J$ the complex structure.

Explicitly, if we decompose $D$ into antihermitian and hermitian parts (i.e. $\pm 1$-eigenspaces for $\C\tau$)   as $D= d^+ +\vartheta$, so
$$
d^+= \half(D+ C\tau\circ D \circ C\tau), \quad \vartheta=\half(D  - C\tau\circ D \circ C\tau)
$$
(noting that $d^+$ is a connection and $\vartheta$ is $R(\sA^0\ten \Cx)$-equivariant),
then into $(1,0)$ and $(0,1)$ types as $d^+=\pd + \pd^{\dagger}$ and $\vartheta= \theta+\theta^{\dagger}$, then we have
$$
D^c= i\pd -i \pd^{\dagger}  -i\theta +i\theta^{\dagger}.
$$
\end{definition}

\begin{definition}\label{pluri}
Define the pseudocurvature of the section $s$ by $G_K:= [D,D^c]=DD^c+D^cD$. 
The bundle is called harmonic (as in \cite{Simpson} \S 1) if $\Lambda G_K=0$, for $\L$ the formal adjoint to the K\"ahler form $\omega$. The bundle is said to be pluriharmonic (as in \cite{mochi} Remark 2.2) if $G_K=0$.
\end{definition}

\begin{definition}\label{clubdef}
Given $t\in U_1$ (i.e. $t \in \Cx$ and $|t|=1$), and a  flat pluriharmonic connection $D$, define $t\clubsuit D:= d^{+}+t\dmd\vartheta$.
\end{definition}

\begin{lemma}
The $U_1$-action given by $\clubsuit$ preserves the set of flat pluriharmonic  connections, and $\tau(t\clubsuit D)= t\clubsuit (\tau D)$, where $\tau$ denotes complex conjugation.
\end{lemma}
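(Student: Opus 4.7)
My plan is to prove both claims using a Simpson-type decomposition of the flat connection $D$.

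Set $D' := \pd + \theta^\dagger$ and $D'' := \pd^\dagger + \theta$, so that $D = D' + D''$ and $D^c = i(D'-D'')$. By the standard pluriharmonic theory on a K\"ahler manifold (cf.\ \cite{Simpson} Lemma~1.1, which passes from the $\GL_n$-case to principal $R$-bundles with Cartan involution via a faithful representation of $R$), the combined conditions $D^2 = 0$ and $G_K = DD^c + D^cD = 0$ are jointly equivalent to the three separate Simpson identities
\[
(D')^2 = 0, \qquad (D'')^2 = 0, \qquad D'D'' + D''D' = 0.
\]
I would take this equivalence as the starting point.

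For the preservation of flat pluriharmonic connections under $\clubsuit$: unpacking Definition~\ref{saction}, with $t \in U_1 \subset S(\R)$ corresponding to $(t,\bar t) \in \Cx^*\by\Cx^*$, I first compute $t \dmd \vartheta = t\theta + \bar t\theta^\dagger$, and hence
\[
t \clubsuit D \;=\; \pd + \pd^\dagger + t\theta + \bar t\theta^\dagger,
\]
with Simpson operators $(t\clubsuit D)' = \pd + \bar t\theta^\dagger$ and $(t\clubsuit D)'' = \pd^\dagger + t\theta$. The key calculation is
\[
\bigl((t\clubsuit D)'\bigr)^2 \;=\; \pd^2 + \bar t\bigl(\pd\theta^\dagger + \theta^\dagger\pd\bigr) + \bar t^2(\theta^\dagger)^2,
\]
whose summands live in the disjoint Hodge bidegrees $(2,0),(1,1),(0,2)$. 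The corresponding decomposition of $(D')^2 = 0$ forces each bidegree-component to vanish separately; since the $\clubsuit$-action rescales each component by a complex number of modulus one, the rescaled components for $t\clubsuit D$ vanish as well. An identical analysis handles $(t\clubsuit D)''$ and the anticommutator, where the crucial point is that the $(1,1)$-part of the mixed bracket is unchanged (coefficients $t\bar t = 1$) while the $(2,0)$- and $(0,2)$-parts are rescaled by $t$ and $\bar t$ respectively; all three vanish by the hypothesised vanishing of the corresponding bidegree components of $D'D''+D''D'$.

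For $\tau$-equivariance: $t \in U_1 \subset S(\R)$ is a real point, so the $S$-action is defined over $\R$ and commutes with complex conjugation $\tau$ on $R(\Cx)$. The $\clubsuit$-formula $D \mapsto d^+ + t\dmd\vartheta$ is assembled from two $\tau$-equivariant operations: the $C\tau$-eigenspace splitting $D = d^+ + \vartheta$ (which is $\tau$-equivariant because $C$ is defined over $\R$ and therefore commutes with $\tau$), together with the $t\dmd$-action. Applying $\tau$ throughout yields $\tau(t\clubsuit D) = \tau(d^+) + t\dmd\tau(\vartheta) = (\tau D)^+ + t\dmd (\tau D)^- = t \clubsuit (\tau D)$. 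The principal obstacle is the initial equivalence between the flat pluriharmonic condition and Simpson's three separate identities in the principal-$R$-bundle setting; everything afterwards is a symbolic computation respecting Hodge bidegree.
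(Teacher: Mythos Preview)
The paper states this lemma without proof, so there is nothing to compare against; your argument stands on its own and is correct.

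A few remarks on the key step. You are right that the nontrivial input is the equivalence, on a compact K\"ahler manifold, between the pair of conditions $D^2=0,\ G_K=0$ and the three separate Simpson identities $(D')^2=(D'')^2=D'D''+D''D'=0$. Purely algebraically one only gets $(D')^2=(D'')^2$ and $2(D')^2+D'D''+D''D'=0$, which is weaker; the separation into three vanishing statements genuinely needs the K\"ahler input (Simpson's Bochner-type argument for harmonic metrics, which a fortiori applies to pluriharmonic ones). You flag this as the principal obstacle, which is exactly right. The passage from $\GL_n$ to a general pro-algebraic $R$ with Cartan involution is better justified by Tannakian duality (checking the identities on every finite-dimensional $R$-representation) than by appealing to a single faithful representation, since $R$ is only pro-algebraic; but this is a cosmetic point.

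Once the three Simpson identities are in hand, your bidegree computation is clean: each of the nine scalar identities (three bidegree components of each of the three operator identities) is rescaled by a unimodular factor $t^a\bar t^b$, hence remains zero, and the reverse implication (Simpson identities $\Rightarrow$ flat and pluriharmonic) is immediate algebra. The $\tau$-equivariance argument is also fine: complex conjugation swaps $(p,q)$ and $(q,p)$ types while $t\in U_1$ acts by $t^p\bar t^q$, so $\overline{t\dmd\omega}=t\dmd\bar\omega$, and the $C\tau$-eigenspace splitting commutes with $\tau$ because $C$ is defined over $\R$.
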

 
 \begin{definition}\label{mhdef}
 Given a 
 real analytic space  $U$, define $\fM_{\cH,X,R}(U)$ to be the groupoid with objects $(\sP,s,\nabla)$, for
 $\sP$ a principal $R(\sA^0_{X\by U/U}\ten \Cx)$-bundle  on $X$,  equipped with a global section $s$ of $\sP/R_{\Cx}^{C\tau}(\sA^0)$, and 
$$  
\nabla : \sP\to\ad \sP\ten_{\sA^0_{X\by U/U}}\sA^1_{X\by U/U}
$$ 
 a flat pluriharmonic connection in the sense of Definition \ref{pluri}. 

An isomorphism from $(\sP,s, \nabla)$ to $(\sP',s', \nabla')$ is an isomorphism $f: \sP \to \sP'$, such that $f(s)=s'$ and $\nabla \circ f= \ad f \circ \nabla'$. 
\end{definition}
     
  \begin{lemma}
  There is a canonical map $\Upsilon:\fM_{\cH,X,R}\to \fM_{\dR,X,R}$, and this maps  isomorphism classes of $\R$- valued  points isomorphically to the reductive representations.
  \end{lemma}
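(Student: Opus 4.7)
The plan is first to define $\Upsilon$ by forgetting the compact reduction $s$, and then to reduce the bijectivity claim on $\R$-points to the Corlette--Simpson theorem, extended to pro-algebraic groupoids by Tannakian duality.

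On objects I would set $\Upsilon(\sP, s, \nabla):=(\sP,\nabla)$, and on morphisms $\Upsilon$ acts by restriction: by Definition \ref{mhdef} every morphism of pluriharmonic bundles is already an isomorphism of the underlying flat $R(\sA^0_{X\by U/U}\ten\Cx)$-bundles, and a pluriharmonic connection is flat by Definition \ref{pluri}, so the target really lies in $\fM_{\dR,X,R}(U)$. Functoriality in $U$ is immediate.

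For the second assertion I would specialise to $U=\Spec\R$. By Proposition \ref{analytic}, iso classes in $\fM_{\dR,X,R}(\R)$ correspond to conjugacy classes of groupoid homomorphisms $\rho\co\pi_fX\to R(\Cx)$, and the task is to identify the image of $\Upsilon$ with the pro-reductive such $\rho$. For the forward direction I would apply Corlette's theorem (as in \cite{Simpson} \S 1) to each finite-dimensional real representation $V$ of $R$: the Cartan involution $C$ induces one on $\GL(V)$, so the reduction $s$ becomes a harmonic metric on the flat vector bundle associated to $\rho_V$, and existence of such a metric forces $\rho_V$ to have reductive Zariski closure; letting $V$ range over all such representations then shows that $\rho$ itself is pro-reductive. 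For the backward direction, given pro-reductive $\rho$, Simpson's existence and uniqueness theorem in \cite{Simpson} supplies a pluriharmonic metric on each associated vector bundle, unique up to automorphisms of $\rho_V$. By Tannakian duality these metrics assemble into a single reduction $s$ of $\sP$ to the compact form $R_\Cx^{C\tau}$, and since the pluriharmonic condition $G_K=0$ is preserved under extension of structure group, $(\sP,s,\nabla)$ defines a point of $\fM_{\cH,X,R}(\R)$ with $\Upsilon$-image $(\sP,\nabla)$. Simpson's uniqueness then shows that two preimages differ by an automorphism of $(\sP,\nabla)$, yielding the required bijection on iso classes.

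The main obstacle is the Tannakian bookkeeping: I need to verify that compatible pluriharmonic metrics on all associated vector bundles truly descend to a single section $s$ of $\sP/R_\Cx^{C\tau}$, and conversely that $G_K=0$ for the $R$-bundle is equivalent to the pluriharmonic condition for every associated vector bundle. Both statements reduce to the naturality in the structure group of the operator $D^c=J\ad_{C\tau}D$ and of the bracket $[D,D^c]$, combined with the fact that the Simpson correspondence for reductive representations defines an equivalence of neutral Tannakian categories, compatible with the chosen compact forms.
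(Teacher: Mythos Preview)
Your proposal is correct and follows essentially the same route as the paper: define $\Upsilon$ by forgetting $s$, invoke the Corlette--Simpson correspondence for $\GL_n$ to characterise the image and establish uniqueness, then pass to general $(R,C)$ by Tannakian duality. The only cosmetic difference is that the paper cites \cite{mochi} Theorem~1.1 for the uniqueness-up-to-automorphism statement (deducing it from uniqueness up to scalar on irreducibles), whereas you attribute it to Simpson; and the paper leaves the Tannakian bookkeeping you flag as an obstacle entirely implicit.
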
   
 \begin{proof}
This comes from forgetting the section $s$. Note that the image of this map is      characterised  in \cite{Simpson} Theorem 1 as the semisimple local systems when $(R, C)=(\GL_n, \text{transpose})$. This is because a harmonic metric on a local system $\vv$ defines a $\C^{\infty}$ $U_n$-torsor. By Tannakian duality, it follows that the image in general comprises the reductive representations. 

Moreover, it follows from \cite{mochi} Theorem 1.1 that the harmonic metric on $\vv$ is unique up to conjugation by $\Aut_X(\vv)$ (by uniqueness up to a scalar for irreducible local systems). This shows that $\Upsilon$ is injective on isomorphism classes when $R=\GL_n$, and hence in general. 
  \end{proof}
\begin{remark}
Note that the functor $\Upsilon$ is not full on $\R$-valued points: if $X$ is a point, then $ \fM_{\cH,X,R} \simeq R^{C\tau}$, while $\fM_{\dR,X,R} \simeq R$.
\end{remark}
  
 \section{The pluriharmonic twistor stack}\label{tcH}
 
 \begin{definition}
Define $Q$ to be the real scheme  $Q(A)=\bH\ten A$  representing the quaternions, so $Q\cong\bA_{\R}^4$. We have a real analytic isomorphism $\jmath: C(\Cx) \to Q(\R)$, given 
 by mapping $(\alpha,\beta)$  to $\alpha+\beta j$. Let $Q^*=Q-\{0\}$. 
 \end{definition} 

\begin{definition}\label{relpluristack}
 Given a 
 real analytic space  $U$ over $Q^*$, define $\fT_{\cH,X,R}(U)$ to be the groupoid with objects    $(\sP,s,\nabla)$, for
 $\sP$ a principal $R(\sA^0_{X\by U/U})$-bundle  on $X$,  equipped with a global section $s$ of $\sP/R^C(\sA^0)$, and 
$$  
\nabla : \sP\to\ad \sP\ten_{\sA^0_{X\by U/U}}\sA^1_{X\by U/U}\ten\Cx
$$ 
an $\alpha d +\beta d^c$-connection. 
 We also require that $\nabla$ be flat and pluriharmonic, i.e. that
 $$
 (\alpha d +\beta d^c)\circ\nabla =0:\sP \to  \ad \sP\ten_{\sA^0_{X\by U/U}}\sA^2_{X\by U/U},
 $$
 and that the commutator $[\nabla,\nabla^c]:=\nabla\nabla^c+\nabla^c\nabla$ vanishes, for $\nabla^c= J \ad_{C\tau} \nabla$.

An isomorphism from $(\sP,s, \nabla)$ to $(\sP',s', \nabla')$ is an isomorphism 
$$
f: \sP\by^{R(\sA^0_{X\by U/U})}R(\sA^0_{X\by U/U}\ten\Cx) \to \sP'\by^{R(\sA^0_{X\by U/U})}R(\sA^0_{X\by U/U}\ten\Cx)
$$
 of $R(\sA^0_{X\by U/U}\ten\Cx)$-bundles, such that $\nabla \circ f= \ad f \circ \nabla'$ and $\nabla \circ f= \ad f \circ \nabla'$. 
  \end{definition}

\begin{remarks}
 Note that we may extend this definition to give a stack over $Q$, but that this would not be analytic (i.e. not have a presentation), since the fibre over $0$ is too large.
\end{remarks}

Observe that $\fM_{\cH,X,R}=\fT_{\cH,X,R}\by_{Q^*}(1,0)$. 

\begin{definition}
  Define the involution $\tau$ on  $\fT_{\cH,X,R}$ to be given by complex conjugation, lifting the map $\tau=\ad_j$ on $Q^*$ given by $\alpha+\beta j\mapsto \bar{\alpha}+\bar{\beta} j$.
  \end{definition}

\begin{definition}\label{stardef}
We may define a real analytic  $Q^*$-action on $\fT_{\cH,X,R}(U)$ over $Q^*$ by $(\alpha+\beta j)\star (\sP,\nabla):=(\sP,\alpha\nabla+\beta J\ad_{C\tau}\nabla)$. This is $\tau$-equivariant in the sense that $\tau((\alpha+\beta j)\star y)= (\bar{\alpha}+\bar{\beta} j)\star \tau(y)$.
\end{definition}

\begin{remark}
Observe that,  although multiplication on the scheme $Q^*$ does not have an inverse, its $\R$-valued points have a group structure, since they coincide with $\R$-valued points of the open subscheme $|\alpha|^2 +|\beta|^2\ne 0$.
 \end{remark}

\begin{proposition}
$\fT_{\cH,X,R} \cong Q^*\by\fM_{\cH,X,R}$ as a real analytic stack over $Q^*$.
\end{proposition}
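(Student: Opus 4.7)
The plan is to use the $\star$-action of Definition \ref{stardef} to trivialize $\fT_{\cH,X,R}$ over $Q^*$. Given a real analytic space $U$ with a map $q = \alpha + \beta j : U \to Q^*$ and a triple $(\sP, s, \nabla_0) \in \fM_{\cH,X,R}(U)$, I define
$$
\Phi\bigl(q, (\sP, s, \nabla_0)\bigr) := q \star (\sP, s, \nabla_0) = (\sP, s, \alpha \nabla_0 + \beta \nabla_0^c),
$$
which lies over $q : U \to Q^*$. For the inverse direction, I exploit the fact that $Q^*(\R) = \bH^*$ is a group and quaternionic inversion $q \mapsto q^{-1}$ is real analytic, so the pointwise inverse $q^{-1} : U \to Q^*$ exists. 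Setting
$$
\Psi(\sP, s, \nabla) := \bigl(q,\; q^{-1} \star (\sP, s, \nabla)\bigr),
$$
the triple $q^{-1} \star (\sP, s, \nabla)$ lies over $q^{-1} q = 1 \in Q^*(\R)$, hence defines an object of $\fM_{\cH,X,R}(U)$.

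Two verifications are required. First, $\Phi$ must land in $\fT_{\cH,X,R}$: the Leibniz rule for $\nabla_0^c = J\ad_{C\tau}\nabla_0$ forces $\alpha\nabla_0 + \beta\nabla_0^c$ to be an $\alpha d + \beta d^c$-connection, while its flatness and pluriharmonicity can be extracted by expanding $\nabla_0 = \partial + \bar\partial^\dagger + \theta + \theta^\dagger$ as in Definition \ref{pluri} and using the pluriharmonic equations for $\nabla_0$. Second, $\star$ must be a group action. Writing $q_i = \alpha_i + \beta_i j$, the quaternionic product is $q_1 q_2 = (\alpha_1\alpha_2 - \beta_1\bar\beta_2) + (\alpha_1\beta_2 + \beta_1\bar\alpha_2)j$, using $j\alpha = \bar\alpha j$. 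On the other side, $\ad_{C\tau}(c\nabla) = \bar c\,\ad_{C\tau}(\nabla)$ for $c \in \Cx$ (since $\tau$ is complex conjugation), and $(J\ad_{C\tau})^2 = -\id$ (since $J$ acts on forms, $\ad_{C\tau}$ on values, so they commute, and $J^2 = -\id$, $(\ad_{C\tau})^2 = \id$). Then
$$
q_1 \star (q_2 \star \nabla) = (\alpha_1\alpha_2 - \beta_1\bar\beta_2)\nabla + (\alpha_1\beta_2 + \beta_1\bar\alpha_2)\nabla^c = (q_1 q_2) \star \nabla.
$$

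Given these, $\Phi \circ \Psi = \id$ and $\Psi \circ \Phi = \id$ follow formally from the action property, and $Q^*$-equivariance over $Q^*$ is immediate. The main obstacle is the first verification: showing that the $\star$-action preserves the flat-pluriharmonic condition when passing from $d$-connections to $\alpha d + \beta d^c$-connections. This is essentially the statement that Simpson's twistor family associated with a harmonic bundle is pluriharmonic at every slope, and rests on the Kähler identities for harmonic bundles.
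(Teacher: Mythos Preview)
Your proof is correct and follows the same route as the paper: the isomorphism is exactly $(q,y)\mapsto q\star y$ with inverse $z\mapsto(\pi(z),\pi(z)^{-1}\star z)$. You supply more detail than the paper does, verifying explicitly that $\star$ is a group action and that it preserves the flat-pluriharmonic condition, both of which the paper has already folded into Definition~\ref{stardef} and the remark after it.
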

\begin{proof}
The isomorphism is given by $z \mapsto (\pi(z), \pi(z)^{-1}\star z)$, for $\pi: \fT_{\cH,X,R} \to Q^*$. The inverse is $(q, y) \mapsto q \star y$.
\end{proof}

 \begin{definition}\label{upsdef}
  Define $\Upsilon:\fT_{\cH,X,R}\to \fT_{\dR,X,R}$ by $(\sP,s,\nabla)\mapsto (\sP,\nabla)$; this is a morphism over $\jmath^{-1}:  Q^*(\R) \to C^*(\Cx)$.
  \end{definition}   

\begin{definition}
Let $j:= 1\ten i \in \Cx\ten_{\R}\Cx$.  For $\gamma \in C(\Cx)$, this gives $\gamma = \alpha +\beta j$ in terms of the standard co-ordinates.
Note that $j$ commutes with $\Cx \ten 1$, and that the isomorphism $\jmath$ is then given by $\alpha +\beta j \mapsto \alpha +\beta j$.
\end{definition}

 Observe that $S(\Cx)$ then consists of $\alpha +\beta j$ for which $\alpha^2+\beta^2$ is invertible, and that on a $(p,q)$-form, the $\dmd$-action of $\alpha +\beta j \in S(\Cx)$ is $(\alpha +i\beta)^p(\alpha -i\beta)^q$.

\begin{proposition}\label{upsprops}
For $(\alpha +\beta j ) \in  S(\Cx)$, such that $|\alpha +i\beta|=|\alpha -i\beta |$, and $y$ in $\fM_{\cH,X,R}$
$$
(\alpha +\beta j ) \dmd \Upsilon(   y) = \Upsilon ((\alpha +\beta j ) \star (\frac{\alpha +i\beta }{\alpha -i\beta} \clubsuit y)),
$$
for $\clubsuit$ as in Definition \ref{clubdef}.
In particular, for $\beta=0$ (corresponding to $\bG_m(\Cx) \subset S(\Cx)$), the $\star$ and $\dmd$-actions agree.
\end{proposition}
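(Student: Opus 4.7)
The plan is to compare both sides as explicit connections on the common underlying bundle $\sP$ after decomposing $D$ into its four Hodge/Cartan components. Write $y = (\sP, s, D)$ with $D = d^+ + \vartheta$ the $\ad_{C\tau}$-eigenspace decomposition and $d^+ = \pd + \pd^\dagger$, $\vartheta = \theta + \theta^\dagger$ the further splitting by $(p,q)$-type. Set $\lambda := \alpha + i\beta$, $\mu := \alpha - i\beta$, and $t := \lambda/\mu$; the hypothesis $|\alpha + i\beta| = |\alpha - i\beta|$ is exactly $|\lambda| = |\mu|$, placing $t$ in $U_1$ (so the $\clubsuit$-action of $t$ is defined on $y$) and giving $\bar{t} = t^{-1} = \mu/\lambda$. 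Definition \ref{saction}, together with the description of the $\dmd$-action on $(p,q)$-forms following Definition \ref{scoords} (scaling $(1,0)$-components by $\lambda$ and $(0,1)$-components by $\mu$), then yields
\[
(\alpha + \beta j) \dmd \Upsilon(y) = (\sP,\, \lambda(\pd+\theta) + \mu(\pd^\dagger+\theta^\dagger)).
\]

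For the right-hand side, Definition \ref{clubdef} gives
\[
t \clubsuit D = d^+ + t\dmd\vartheta = \pd + \pd^\dagger + t\theta + \bar{t}\theta^\dagger,
\]
which remains flat and pluriharmonic by the lemma following that definition. Interpreting $\ad_{C\tau}$ consistently with the formula $D^c = i\pd - i\pd^\dagger - i\theta + i\theta^\dagger$ after Definition \ref{pluri}---that is, as the operator acting as $+1$ on the $d^+$-summand and $-1$ on the $\vartheta$-summand, extended $\Cx$-linearly---one computes $\ad_{C\tau}(t\clubsuit D) = d^+ - t\theta - \bar{t}\theta^\dagger$. Applying $J$ (which is $+i$ on $(1,0)$ and $-i$ on $(0,1)$-forms) gives $J\ad_{C\tau}(t\clubsuit D) = i\pd - i\pd^\dagger - it\theta + i\bar{t}\theta^\dagger$, so Definition \ref{stardef} and collection by type yield
\[
\alpha(t\clubsuit D) + \beta J\ad_{C\tau}(t\clubsuit D) = \lambda\pd + \mu\pd^\dagger + \mu t\,\theta + \lambda \bar{t}\,\theta^\dagger.
\]
The identities $\mu t = \lambda$ (from $t = \lambda/\mu$) and $\lambda \bar{t} = \mu$ (from $\bar{t} = \mu/\lambda$, where $|\lambda| = |\mu|$ is essential) reduce this to $\lambda(\pd + \theta) + \mu(\pd^\dagger + \theta^\dagger)$, matching the left-hand side after $\Upsilon$ discards the section $s$.

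The main obstacle is the correct bookkeeping for $\ad_{C\tau}$ applied to the rotated Higgs field $t\theta + \bar{t}\theta^\dagger$: the complex scalars $t$ and $\bar{t}$ must be regarded as sitting in the same $-1$-eigencomponent as $\theta$ and $\theta^\dagger$, rather than being complex-conjugated away by $C\tau$, a convention dictated by the expression for $D^c$ already fixed in Definition \ref{pluri}. The final assertion, for $\beta = 0$ corresponding to $\bG_m(\Cx) \subset S(\Cx)$, is immediate: then $\lambda = \mu = \alpha$ and $t = 1$, so $t\clubsuit y = y$, and both the $\dmd$- and $\star$-actions collapse to scalar multiplication of the connection by $\alpha$, which $\Upsilon$ manifestly intertwines.
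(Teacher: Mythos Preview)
Your proof is correct and follows essentially the same direct-computation strategy as the paper: decompose $D$ into its four pieces $\pd, \pd^\dagger, \theta, \theta^\dagger$, rewrite both actions in the coordinates $\lambda = \alpha+i\beta$, $\mu = \alpha-i\beta$, and match.

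The one organisational difference is where the $\star$-action is evaluated. The paper applies $(\alpha+\beta j)\star$ to the \emph{undeformed} connection $D$, obtaining
\[
(\alpha+\beta j)\star D = \lambda(\pd+\theta^\dagger) + \mu(\pd^\dagger+\theta),
\]
recognises this as $(\alpha+\beta j)\dmd\bigl(\tfrac{\mu}{\lambda}\clubsuit D\bigr)$, and then substitutes $z=\tfrac{\lambda}{\mu}\clubsuit y$. Because the $C\tau$-eigenspace decomposition of $D$ is $d^++\vartheta$ \emph{by definition}, no question about how $\ad_{C\tau}$ treats the complex scalars $t,\bar t$ ever arises. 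You instead apply $\star$ to $t\clubsuit D$ directly, which forces you to decide what $\ad_{C\tau}(t\theta+\bar t\theta^\dagger)$ is. Your answer $-(t\theta+\bar t\theta^\dagger)$ is correct, but the justification (``extended $\Cx$-linearly'') is not quite the right reason: $\ad_{C\tau}$ is antilinear and interchanges $(1,0)$ and $(0,1)$ types, so $\ad_{C\tau}(t\theta)=-\bar t\,\theta^\dagger$ and $\ad_{C\tau}(\bar t\,\theta^\dagger)=-t\,\theta$; it is the specific pairing of $t$ with $\theta$ and $\bar t$ with $\theta^\dagger$ that makes the combination a genuine $(-1)$-eigenvector. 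The paper's ordering sidesteps this bookkeeping entirely, which is its modest advantage.
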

\begin{proof}
 This follows from the observation  that for $D$ a $d$-connection (associated to an object of $\fM_{\cH,X,R} $),
\begin{eqnarray*}
(\alpha +\beta j) \star D&=& (\alpha +\beta J) d^+ + (\alpha - \beta J) \vartheta\\
&=& (\alpha +i\beta)(\pd  + \theta^{\dagger}) +(\alpha -i\beta)(\pd^{\dagger}  +\theta)\\
&=& (\alpha +\beta j )\dmd(d^+ + ( \frac{\alpha^2+\beta^2 +2\alpha\beta j}{\alpha^2 +\beta^2})\dmd\vartheta)
.
\end{eqnarray*}
Thus 
$$
(\alpha +\beta j )\dmd \Upsilon( \frac{\alpha -i\beta}{\alpha +i\beta }  \clubsuit  z) = \Upsilon (  (\alpha +\beta j )\star z),
$$
and the result follows by setting $z= \frac{\alpha +i\beta }{\alpha -i\beta} \clubsuit y$.
\end{proof}

\begin{remark}\label{involutions}
Making use of the isomorphism $\bP^1_{\Cx}\cong [C^*_{\Cx}/\bG_{m,\Cx}]$, and the compatibility of the $\Cx^*$ actions for $\star$ and $\dmd$, we  obtain an analytic morphism 
$$
\Upsilon: [\fM_{\cH,X,R}/\bG_m(\Cx)] \to [\fM_{\dR,X,R}/\bG_m(\Cx)]
$$
over $\bP^1(\Cx)= [C^*(\Cx)/\bG(\Cx)]$. 

$\Upsilon$  is $\tau$-equivariant, where $\tau$ is the involution $\tau(\alpha:\beta)=(\bar{\alpha}:\bar{\beta})$. Making use of the co-ordinate change $(\lambda: \mu)=(\alpha+i \beta: \alpha -i\beta)$ as Definition \ref{scoords}, $\tau$  corresponds to the circular involution $(\lambda: \mu) \mapsto (\bar{\mu}: \bar{\lambda})$ of \cite{MTS} p.12 . The antipodal involution $\sigma(\lambda: \mu)= (i\bar{\lambda}, -i\bar{\mu})$ of \cite{MTS} corresponds in our co-ordinates to $ \sigma(\alpha:\beta)= (-\bar{\beta}: \bar{\alpha})$ --- this lifts to $[\fM_{\cH,X,R}/\Cx^*]$ as $j\star$. 
\end{remark}

\begin{remark}\label{cfsimpson}
We may also use these actions to describe the discrete $\Cx^*$-action given in \cite{Simpson} on the points  of $\fM_{\cH,X,R}$. $\lambda\in \Cx^*$ maps $y$ to
$$
(1-ij)\star(\Upsilon^{-1}(\frac{\lambda+1}{2} +\frac{\lambda-1}{2i} j)\dmd\Upsilon(\frac{1+ij}{2}\star y)),
$$
making use of the injectivity of $\Upsilon$ on isomorphism classes of objects, and the fact that $\im \Upsilon$ is preserved by the $\dmd$-action. 

This maps $D=\pd + \pd^{\dagger}+\theta+\theta^{\dagger}$ to a connection $D'$ with $(\pd^{\dagger})'=\pd^{\dagger}$ and $\theta'=\lambda \theta$. If $\lambda \in U_1 \subset \Cx^*$, this expression reduces to $\lambda \clubsuit D$.
\end{remark}

\section{Local structure of the moduli stacks}\label{local}

Fix an $\R$-valued object $y$ of $\fT_{\cH,X,R}$, i.e. a point $\alpha +\beta j \in \bH^*$, together with a triple $(\sP, s, \nabla)$ as in Definition \ref{relpluristack}, for $\nabla$ a flat pluriharmonic $\alpha d +\beta d^c$-connection. We now describe the local structure of $\fT_{\cH,X,R}$ over $Q^*$ about $y$, and of  $\fT_{\dR,X,R}$ over $C^*(\Cx)$ about $\Upsilon(y)$. 

As in \cite{GM}, \S 3.1, the analytic germs are determined by evaluating the stack at Artinian local $\R$-algebras. Explicitly, let $\C_{\R}$ be the category of  Artinian local $\R$-algebras with residue field $\R$. Then the germ of $\fT_{\cH,X,R}$ at $y$ is determined by the groupoid-valued functor 
\begin{eqnarray*}
(\fT_{\cH,X,R})_y: \C_{\R} &\to& \Gpd \\
A & \mapsto& \fT_{\cH,X,R}(\Spec A)\by_{ \fT_{\cH,X,R}(\Spec \R)} y.
\end{eqnarray*}

We may also describe $Q^*$ in this way, with 
\begin{eqnarray*}
Q^*_{\alpha +\beta j}: \C_{\R} &\to& \Set \\
A & \mapsto& Q^*(\Spec A)\by_{ Q^(\Spec \R)}(\alpha +\beta j);
\end{eqnarray*}
this is pro-represented by
$\widehat{O(Q)_{\alpha +\beta j}}$, the complete local ring of $Q$ at $\alpha +\beta j$, which is isomorphic to the real power series ring in $4$ variables. 

There is a similar description for $\fT_{\dR,X,R}$ (noting that $C^*(\Cx)$ is a smooth $4$-dimensional real analytic space).

In \cite{GM} \S 2, a deformation groupoid is associated to every real differential graded Lie algebra (DGLA) $L$.
\begin{definition} Fix a real DGLA $L$. The Maurer-Cartan functor 
$\mcl:\C_{\R} \ra \Set$ is defined by 
$$ 
\mcl(A)=\{x \in L^1\otimes \m(A) |dx+\half[x,x]=0\}.
$$

Define the gauge functor $G_L:\C_{\R} \ra \Grp$ by $G_L(A)=\exp(L^0 \ten \m(A))$. This acts on $\mcl$ by the gauge action $g(x):= gxg^{-1} -(dg)g^{-1}$.
Define the deformation functor $\Def_L:\C_{\R} \to \gpd$ by $\Def_L:= [\mcl/G_L]$, and say that $L$ governs a functor $F$ if $F$ is equivalent to $\Def_L$.
\end{definition}

\begin{definition}
Given $y= (\alpha + \beta j,\sP, s, \nabla)\in (\fT_{\cH,X,R})(\R)$, define
\begin{eqnarray*}
A^{\bt}(X,\ad y)&:=& \H^0(X, (\ad \sP\ten_{\sA^0_{X\by U/U}}\sA^*_{X\by U/U}, \nabla))\\
\H^*(X,\ad y)&:=& \H^*( A^{\bt}(X,\ad y)).
\end{eqnarray*}
\end{definition}

\begin{lemma}\label{gmtwist}
Given $y= (\alpha + \beta j,\sP, s, \nabla)\in (\fT_{\cH,X,R})(\R)$,
the functor $(\fT_{\dR,X,R})_{\Upsilon(y)}: \C_{\R} \to \Gpd$ is governed by the DGLA
$$
L^i:= \left\{\begin{matrix} A^i(X,\ad y) & i \ne 1\\
\Cx\ten_{\R}\Cx \oplus A^1(X,\ad y) & i =1, \end{matrix} \right.
 $$
with the standard differential and Lie bracket on $A^{\bt}(X,\ad y) $, while $d(\Cx\ten_{\R}\Cx )=0$, and  $[u+vj, w]= \frac{(u\bar{\alpha} - v\bar{\beta})\nabla w + (v\alpha -u\beta)\nabla^c w}{|\alpha|^2+|\beta|^2}$, for  $u+vj \in \Cx\ten_{\R}\Cx$ and $w \in  A^i(X,\ad y)$.
\end{lemma}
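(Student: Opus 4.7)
The plan is to apply the Goldman--Millson framework (\cite{GM}) to the germ $(\fT_{\dR,X,R})_{\Upsilon(y)}$, viewing it as parameterising a deformation of the base point $\alpha+\beta j \in C^*(\Cx)$ together with a compatible deformation of the flat $(\alpha d + \beta d^c)$-connection $\nabla$ on the bundle $\sP$.

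For fixed base point, the classical Goldman--Millson theorem applies verbatim (replacing their flat connections with flat $(\alpha d+\beta d^c)$-connections): deformations of $\nabla$ are governed by the DGLA $A^{\bt}(X,\ad y)$ with differential $\nabla$ and the standard bracket, yielding the $A^{\bt}$-summand of $L$. Since $C^*(\Cx)$ is smooth of real dimension $4$ at $\alpha+\beta j$, its germ is governed by the abelian DGLA concentrated in degree $1$ on the real tangent space $\Cx\ten_{\R}\Cx$, with zero differential, supplying the remaining summand of $L^1$.

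The mixed bracket encodes the coupling of these two deformations. An infinitesimal base-point displacement by $(u+vj)\epsilon$ forces the connection to acquire a correction of $(u d+v d^c)$-type, so that the Leibniz rule holds at the new base point $((\alpha+u\epsilon)d+(\beta+v\epsilon)d^c)$. The crucial observation is that the pluriharmonic structure on $y \in \fT_{\cH,X,R}$ supplies the auxiliary operator $\nabla^c = J\ad_{C\tau}\nabla$, which is of $(\alpha d^c-\beta d)$-type; inverting the linear relation between $(\nabla,\nabla^c)$ and $(d,d^c)$-type operators, working over $\Cx\ten_{\R}\Cx$ so that complex conjugates $\bar\alpha,\bar\beta$ enter naturally through the restriction-of-scalars real structure on $C$, gives precisely the combination $\frac{(u\bar\alpha-v\bar\beta)\nabla + (v\alpha-u\beta)\nabla^c}{|\alpha|^2+|\beta|^2}$ appearing in the statement.

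Finally one verifies that $L$ is a DGLA---the Jacobi identity for the mixed bracket reducing, by a direct computation, to the pluriharmonic flatness $[\nabla,\nabla^c]=0$---and that $\Def_L$ is equivalent to $(\fT_{\dR,X,R})_{\Upsilon(y)}$, by unwinding the Maurer--Cartan equation (which incorporates the new flatness plus the coupling correction) and the gauge action. The main obstacle will be the bookkeeping in deriving the explicit bracket coefficients: ensuring the conjugations land correctly requires careful attention to the fact that $\Cx\ten_{\R}\Cx$ is not the holomorphic complexification of a real tangent space, but arises from the restriction-of-scalars structure $C=\prod_{\Cx/\R}\bA^1$, so that $\alpha$ and $\bar\alpha$ play distinguishable roles.
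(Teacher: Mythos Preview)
Your approach is correct and essentially coincides with the paper's. The paper's proof is organised around the quaternionic $\star$-action of Definition \ref{stardef}: given $(u+vj,\omega)\in L^1\otimes\m(A)$, it defines the deformed connection explicitly as $\tilde\nabla=\jmath(\tilde\alpha+\tilde\beta j)\star\jmath(\alpha+\beta j)^{-1}\star\nabla+\omega$ (using that $\C^\infty$-bundles do not deform), then checks that flatness of $\tilde\nabla$ is the Maurer--Cartan equation. Your ``inverting the linear relation between $(\nabla,\nabla^c)$ and $(d,d^c)$-type operators'' is exactly the computation of the quaternion product $(\tilde\alpha+\tilde\beta j)(\alpha+\beta j)^{-1}$ to first order, so the two derivations of the mixed bracket are the same calculation in different notation. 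Your observation that the DGLA axioms for $L$ rest on pluriharmonicity $[\nabla,\nabla^c]=0$ is correct and is implicit (but not spelled out) in the paper's argument, where it enters as $(q\star\nabla)^2=0$.
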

\begin{proof}
Fix $A \in \C_{\R}$, and denote the maximal ideal by $\m(A)$. Since $\C^{\infty}$-bundles do not deform, any $A$-valued deformation of $(\sP, \nabla)$ is isomorphic to a deformation of the form $(\tilde{\alpha}+\tilde{\beta}j,\sP\by^RR(A), \tilde{\nabla})$. Here, $\tilde{\alpha}+ \tilde{\beta}j \in A\ten_{\R} \Cx\ten_\R\Cx$, congruent  to $\alpha + \beta j\mod \m(A)$. Similarly, 
$$
\tilde{\nabla} : \sP\to\ad \sP\ten_{\sA^0_{X\by U/U}}\sA^1_{X\by U/U}\ten A,
$$  
is a flat $\tilde{\alpha} d +\tilde{\beta} d^c$-connection, congruent to $\nabla \mod \m(A)$.

Now, given an element $(u+vj, \omega) \in \mc_L(A)\subset L^1\ten \m(A)$ of the Maurer-Cartan space, define
\begin{eqnarray*}
(\tilde{\alpha}+ \tilde{\beta}j)&:=& (\alpha + \beta j)+(u+vj)\\
\tilde{\nabla}&:=& \jmath(\tilde{\alpha}+ \tilde{\beta}j)\star \jmath(\alpha + \beta j)^{-1}\star \nabla +\omega.
\end{eqnarray*}

This gives the required correspondence, noting that flatness of $\tilde{\nabla}$ is equivalent to the Maurer-Cartan equation
$$
d(u+vj, \omega)+\half[(u+vj, \omega), (u+vj, \omega)]=0.
$$
The treatment of isomorphisms is similar (as in \cite{GM} \S 6).
\end{proof}

The following result generalises the formality results of \cite{GM}, and shows that the moduli stack has only quadratic singularities in the image of $\Upsilon$.
\begin{proposition}\label{cfgm}
Given $y= (\alpha + \beta j,\sP, s, \nabla)\in (\fT_{\dR,X,R})(\R)$, then  the germ $(\fT_{\dR,X,R}, \Upsilon(y))$ is isomorphic to $(C^*(\Cx), \alpha +\beta j) \by \Def_{(\H^*(X,\ad y))}$ over $C^*(\Cx)$.
\end{proposition}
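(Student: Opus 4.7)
The strategy is to show that the DGLA $L$ of Lemma \ref{gmtwist} is formal, that is, quasi-isomorphic to its cohomology $\Cx\ten_{\R}\Cx[-1]\oplus \H^*(X,\ad y)$ (equipped with zero differential and no brackets between the two summands). Once this is established, the Goldman-Millson principle that quasi-isomorphic DGLAs have equivalent deformation functors, combined with the fact that $\Def$ of a direct sum of DGLAs is the product of the individual $\Def$'s, yields the desired splitting: the abelian factor $\Cx\ten_\R\Cx[-1]$ pro-represents the germ of $C^*(\Cx)$ at $\alpha+\beta j$ (its $\mathrm{MC}$-functor is just $(\Cx\ten_\R\Cx)\ten\m(A)$ with trivial gauge, which matches the tangent space of the smooth $4$-dimensional real analytic space $C^*(\Cx)$ at $\alpha+\beta j$), while $\H^*(X,\ad y)$ governs $\Def_{(\H^*(X,\ad y))}$ by definition.

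The formality itself proceeds by adapting the Deligne-Griffiths-Morgan-Sullivan zig-zag argument from \cite{GM}, replacing the K\"ahler operators $d, d^c$ by $\nabla, \nabla^c$, which commute because $y$ is pluriharmonic (Definition \ref{pluri}). The essential input is Simpson's $\nabla\nabla^c$-lemma for harmonic bundles (\cite{Simpson}): any form on $X$ with coefficients in $\ad y$ that is simultaneously $\nabla$-closed, $\nabla^c$-closed, and $(\nabla$- or $\nabla^c)$-exact, is in fact $\nabla\nabla^c$-exact. I would then consider the sub-DGLA
\[
L_c := \Cx\ten_\R\Cx[-1]\oplus \ker\bigl(\nabla^c\co A^{\bt}(X,\ad y)\to A^{\bt+1}(X,\ad y)\bigr)\subset L,
\]
which is closed under the bracket because $\nabla^c$ is a derivation and because $[u+vj,w]$, being a linear combination of $\nabla w$ and $\nabla^c w$, lies in $\ker\nabla^c$ whenever $w$ does (using $[\nabla,\nabla^c]=0$). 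Likewise, $\im\nabla^c$ is a DG-Lie ideal of $L_c$, giving a quotient DGLA
\[
L_0 := \Cx\ten_\R\Cx[-1]\oplus(\ker\nabla^c/\im\nabla^c),
\]
and both $L_c\hookrightarrow L$ and $L_c\twoheadrightarrow L_0$ are quasi-isomorphisms on underlying complexes by the $\nabla\nabla^c$-lemma.

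It then remains to identify $L_0$ with $\Cx\ten_\R\Cx[-1]\oplus\H^*(X,\ad y)$ as a DGLA. For any $w\in\ker\nabla^c$, the form $\nabla w$ is $\nabla$-exact, $\nabla$-closed, and $\nabla^c$-closed (since $\nabla^c\nabla w=-\nabla\nabla^c w=0$), hence $\nabla^c$-exact by the lemma. This simultaneously kills the induced differential on $\ker\nabla^c/\im\nabla^c$ and shows that $[u+vj,w]\propto \nabla w$ vanishes in $L_0$, so $L_0$ decouples as a direct sum; the factor $\ker\nabla^c/\im\nabla^c=\H^*_{\nabla^c}$ is identified with $\H^*(X,\ad y)=\H^*_{\nabla}$ as a graded Lie algebra via harmonic representatives, which simultaneously compute both cohomologies and preserve the bracket. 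The main technical input is thus the $\nabla\nabla^c$-lemma, which is standard for pluriharmonic bundles; the genuine content of the proposition lies in verifying that the extra $\Cx\ten_\R\Cx$ factor in $L$ decouples cleanly, which ultimately happens because the bracket $[u+vj,\cdot]$ is expressed purely via $\nabla$ and $\nabla^c$, and both annihilate harmonic representatives.
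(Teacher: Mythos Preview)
Your proposal is correct and follows essentially the same route as the paper: both invoke Lemma~\ref{gmtwist} and then run the Deligne--Griffiths--Morgan--Sullivan zig-zag through $\ker\nabla^c$, using the principle of two types (the $\nabla\nabla^c$-lemma) to establish formality of $L$. The paper is terser, writing the zig-zag as $\H^*(L)\leftarrow(\ker\nabla^c,\nabla)\to L$ and appealing to compatibility with the augmentation to $\Cx\ten_\R\Cx[-1]$ to obtain the splitting over $C^*(\Cx)$; you instead pass through the quotient $L_0=\Cx\ten_\R\Cx[-1]\oplus\ker\nabla^c/\im\nabla^c$ and verify directly that the bracket $[u+vj,\,\cdot\,]$ dies there, which is a useful detail the paper leaves implicit.
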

\begin{proof}
By Lemma \ref{gmtwist}, the germ is given by $\Def_L$. Now, we have quasi-isomorphisms 
$$
\H^*(L) \la (\ker \nabla^c, \nabla) \to L
$$
of DGLAs by the principle of two types (similarly to \cite{mhs} Lemma \ref{mhs-formalitysl}), so $\Def_L \cong \Def_{\H*(L)}$. The quasi-isomorphisms also respect the augmentation maps from these DGLAs to $ \Cx\ten_{\R}\Cx[-1]$, the DGLA governing $(C^*(\Cx), \alpha +\beta j)$. Now, $\H^*(L)\cong \frac{\ker \nabla \cap \ker \nabla^c}{\im \nabla\nabla^c}$, so $\H^*(L)= \H^*(X,\ad y) \by \Cx\ten_{\R}\Cx[-1]$.
\end{proof}

\begin{definition}
Given $a \in A^*(X,\ad y)$, set $a^c:= JC\tau a$. 
\end{definition}

Observe that the description $\H^*(X, \ad y)\cong \frac{\ker \nabla \cap \ker \nabla^c}{\im \nabla\nabla^c}$ ensures that $JC\tau$ is a well-defined automorphism of $\H^*(X, \ad y)$.

\begin{proposition}\label{localpluri}
The isomorphism classes of the germ $(\fM_{\cH,X,R}, y)$ have tangent space $\H^1(X,\ad y)$ and obstruction space $\H^2(X, \ad y) \oplus \H^2(X, \ad y)^{-JC\tau}$, while the automorphism group of $y$ has  tangent space $\H^0(X,\ad y)^{C\tau}$. The action of the automorphism group on the tangent and obstruction spaces  comes from exponentiating the Lie bracket, and the primary obstruction map is
\begin{eqnarray*}
\H^1(X,\ad y) &\to& \H^2(X, \ad y) \oplus \H^2(X, \ad y)^{-JC\tau}\\
\omega &\mapsto& ([\omega,\omega], [\omega, \omega^c]).
\end{eqnarray*}
\end{proposition}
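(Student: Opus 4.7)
The plan is to follow the strategy of Lemma \ref{gmtwist} and Proposition \ref{cfgm}: describe the germ $(\fM_{\cH,X,R},y)$ as the deformation functor $\Def_L$ of a suitable DGLA $L$, then compute $\H^*(L)$ via the principle of two types.

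First, I would rigidify. For $A\in \C_{\R}$ and a deformation $(\tilde\sP,\tilde s,\tilde\nabla)$ of $y$, the triviality of $\C^{\infty}$-bundles under Artinian thickening lets us take $\tilde\sP = \sP\by^R R(A)$, while the motion of $\tilde s$ can be absorbed by an $R_\Cx(\sA^0)\ten A$-gauge transformation; the remaining gauge group is $\exp\bigl(A^0(X,\ad y)^{C\tau}\ten\m(A)\bigr)$, namely the $C\tau$-fixed transformations preserving $s$. After rigidification $\tilde\nabla = \nabla+\omega$ for some $\omega\in A^1(X,\ad y)\ten\m(A)$, and $\tilde\nabla^c = \nabla^c + \omega^c$ with $\omega^c = JC\tau\omega$. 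Flatness of $\tilde\nabla$ becomes $\nabla\omega + \tfrac12[\omega,\omega]=0$ in $A^2(X,\ad y)$, and $[\tilde\nabla,\tilde\nabla^c]=0$ expands, using $[\nabla,\nabla^c]=0$, to $\nabla\omega^c + \nabla^c\omega + [\omega,\omega^c]=0$. A Hodge-type calculation---$\omega\wedge J\omega$ is of pure Hodge type $(1,1)$, while $C\tau$ flips the sign of $[a,C\tau a]$ in the Lie algebra---identifies this second equation as taking values in $A^2(X,\ad y)^{-JC\tau}$.

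Next, package these into a DGLA $L$ with
\[
L^0 = A^0(X,\ad y)^{C\tau},\qquad L^1 = A^1(X,\ad y),\qquad L^2 = A^2(X,\ad y)\oplus A^2(X,\ad y)^{-JC\tau},
\]
differential $L^1\to L^2$ given by $\omega\mapsto(\nabla\omega,\,\nabla\omega^c + \nabla^c\omega)$, and bracket arranged so that the Maurer--Cartan equation recovers the pair $(\nabla\omega+\tfrac12[\omega,\omega],\, \nabla\omega^c+\nabla^c\omega+[\omega,\omega^c])=0$; the higher components may be built from a mapping-cone construction on $\nabla^c\colon(A^*,\nabla)\to(A^{*+1},\nabla)$, but only $L^{\le 2}$ enters the statement. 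Standard DGLA deformation theory (\cite{GM} \S2) then equates the tangent, obstruction and automorphism data of $\Def_L$ with $\H^1(L)$, $\H^2(L)$ and $\H^0(L)$, identifies the primary obstruction with $\omega\mapsto([\omega,\omega],[\omega,\omega^c])$, and realises the action of automorphisms as the exponentiated Lie bracket.

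Finally, compute $\H^*(L)$ by the principle of two types (compare Proposition \ref{cfgm} and \cite{mhs} Lemma \ref{mhs-formalitysl}): pluriharmonicity of $y$ gives $\H^*(X,\ad y)\cong(\ker\nabla\cap\ker\nabla^c)/\im(\nabla\nabla^c)$, on which $JC\tau$ is a well-defined automorphism. A class in $\H^1(X,\ad y)$ represented by $\omega\in\ker\nabla\cap\ker\nabla^c$ then has $\omega^c$ in the same space, so $\nabla\omega^c + \nabla^c\omega = 0$ automatically, giving $\H^1(L)=\H^1(X,\ad y)$; the parallel argument at the image side yields $\H^2(L) = \H^2(X,\ad y)\oplus \H^2(X,\ad y)^{-JC\tau}$. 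For $\H^0$, pluriharmonicity forces $\ker\nabla|_{A^0} = \ker\nabla\cap\ker\nabla^c|_{A^0}$ by two-types, this space is $C\tau$-stable, and intersecting with $(A^0)^{C\tau}$ gives $\H^0(L)=\H^0(X,\ad y)^{C\tau}$. The main obstacle is pinning down the correct DGLA (or $L_\infty$) structure governing the combined flatness-plus-pluriharmonicity problem, and in particular verifying that the pluriharmonicity obstruction $[\omega,\omega^c]$ is $-JC\tau$-anti-invariant; once that is done, the rest is a direct application of the formality machinery already deployed in Proposition \ref{cfgm}.
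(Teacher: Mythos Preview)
Your outline is correct and lands on the same DGLA as the paper (after a coordinate change), but the paper resolves precisely the obstacle you flag --- constructing the governing DGLA --- by a trick you do not use. Rather than building $L$ by hand and hoping the pluriharmonicity constraint fits into an $L_\infty$ framework, the paper observes that $\nabla$ is flat and pluriharmonic if and only if both $\nabla$ and $\nabla+\nabla^c$ are flat (as $d$- and $(d+d^c)$-connections respectively). This immediately exhibits the deformation problem as governed by a \emph{sub-DGLA} $M\subset (A^{\bt}(X,\ad y),\nabla)\times (A^{\bt}(X,\ad y),\nabla+\nabla^c)$, with $M^0$ the $C\tau$-invariant diagonal, $M^1=\{(\omega,\omega+\omega^c)\}$, and $M^2=\{(a,b):b-a-a^c\in (A^2)^{-JC\tau}\}$. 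Being a sub-DGLA of a product of honest DGLAs, $M$ is automatically a DGLA in all degrees with the correct bracket; the change of variables $(a,b)\mapsto (a,b-a-a^c)$ on $M^2$ then recovers exactly your $L^{\le 2}$. This bypasses your mapping-cone suggestion (which would give $A^2\oplus A^2$, not $A^2\oplus (A^2)^{-JC\tau}$) and makes the $-JC\tau$-anti-invariance of the second obstruction automatic rather than something to be checked.

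There is also a genuine step missing in your $\H^1$ computation. You correctly observe that any $\omega\in\ker\nabla\cap\ker\nabla^c$ gives a class in $\H^1(L)$, but for the reverse direction you need: given $\omega\in\z^1(L)$, find $\alpha\in (A^0)^{C\tau}$ with $\omega-\nabla\alpha\in\ker\nabla\cap\ker\nabla^c$. The principle of two types produces such an $\alpha\in A^0$, but not a priori in $(A^0)^{C\tau}$. The paper handles this by decomposing $\alpha=\alpha^++\alpha^-$ into $C\tau$-eigenparts, then using the second closedness condition $\nabla\omega^c+\nabla^c\omega=0$ to deduce $\nabla^c\nabla\alpha^-=0$; since $\nabla\alpha^-$ then lies in $\ker\nabla\cap\ker\nabla^c\cap\im\nabla\subset\im(\nabla\nabla^c)|_{A^{-1}}=0$, one may replace $\alpha$ by $\alpha^+$. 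Your sketch (``the parallel argument at the image side'') does not address this, and without it the identification $\H^1(L)\cong\H^1(X,\ad y)$ is incomplete.
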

\begin{proof}
Observe that a flat $d$-connection $\nabla$ is pluriharmonic if and only if $\nabla +\nabla^c$ is a flat $d+\dc$-connection. 
Thus  $(\fM_{\cH,X,R}, y)$ is governed by the DGLA
$M \subset A^{\bt}(X,\ad y)\by (A^*(X,\ad y), \nabla +\nabla^c)$, given by 
$$
M^i:= \left\{\begin{matrix} 
\{(a,a) \,:\, a \in A^0(X,\ad y)^{C\tau}\} & i =0,\\
\{(a, a+a^c)  \,:\, a \in A^1(X,\ad y)\} & i =1,\\
  \{(a,b) \,:\,b -a-a^c \in A^2(X,\ad y)^{-JC\tau}\} &i=2,\\ 
A^i(X,\ad y)\by A^i(X,\ad y) & i >2. \end{matrix} \right.
$$
This follows because $A^{\bt}(X,\ad y)\by (A^*(X,\ad y), \nabla +\nabla^c)$ governs deformations $(\nabla', E')$, of the pair $(\nabla,\nabla +\nabla^c )$, and $M$ imposes the additional constraint that $E'=\nabla' +(\nabla')^c$.

We now calculate the cohomology groups of this DGLA. Observe that $H^0(M)=\H^0(X,\ad y)^{C\tau}$. For $\H^1$, take $(\omega, \omega+\omega^c) \in \z^1(M)$, so $(\nabla \omega, \nabla \omega + \nabla^c\omega+\nabla \omega^c +\nabla^c\omega^c)=0$, or equivalently $\nabla \omega=  \nabla^c\omega+\nabla \omega^c =0$. By the principle of two types, we may write $\nabla^c\omega= \nabla^c\nabla\alpha$, for some $\alpha \in A^0(X,\ad y)$. Applying $JC\tau$, we also have $\nabla\omega^c= \nabla\nabla^c\alpha^c$, so $(\nabla^c\alpha, \nabla^c\alpha+ (\nabla^c\alpha)^c) \in  \z^1(M)$. Decompose $\alpha$ as $\alpha=\alpha^+ +\alpha^-$ into $\pm 1$-eigenvectors for $C\tau$, and observe that $(\nabla\alpha)^c= \nabla^c\alpha^+-\nabla^c\alpha^-$. Thus $0=\nabla(\nabla\alpha)^c+\nabla^c(\nabla\alpha)=2\nabla^c\nabla\alpha^-$, which means that $\nabla\alpha^- \in \im(\nabla\nabla^c)=0$, by the principle of two types, so we may assume that $\alpha^-=0$. Since $\omega - \nabla\alpha \in \ker(\nabla) \cap \ker(\nabla^c)$, we have
$$
\z^1(M)= \{(a, a+a^c)\,:\, a \in (\ker(\nabla) \cap \ker(\nabla^c))\oplus \nabla(M^0)\},
$$
so $\H^1(M) \cong \ker(\nabla) \cap \ker(\nabla^c)\cong \H^1(X,\ad y)$.

To calculate $\H^2$, take $z \in \H^2(M)$, represented by  $(a,b) \in \z^2(M)$. We may choose $e \in A^1(X,\ad y)$ such that $a + \nabla e \in \ker  \nabla^c$, so without loss of generality, we may assume that $a \in \ker \nabla \cap \ker \nabla^c$. Then 
$$
\nabla^c(b-a-a^c)= (-\nabla(b-a-a^c)^c)^c= (\nabla(b-a-a^c))^c=0,
$$
so $b \in \ker \nabla \cap \ker \nabla^c$.
By the principle of two types, any other such representative is of the form $(a+ \nabla e, b+ (\nabla+\nabla^c)  (e+e^c))$, for $e \in \ker \nabla^c\nabla$. Thus:
$$
\H^2(M)\cong \frac{\{(a,b) \in  (\ker \nabla \cap \ker \nabla^c)\by \ker (\nabla +\nabla^c)\,:\, b -a-a^c \in A^2(X,\ad y)^{-JC\tau}\} }{\{  \nabla e,(\nabla+\nabla^c)  (e+e^c)) \,:\,e \in \ker \nabla^c\nabla  }.
$$
If we now change co-ordinates on $A^*(X, \ad y)^2$, by the transformation $(a,b) \mapsto (a, b-a-a^c)$, then in these new co-ordinates we have
$$
\H^2(M)\cong \frac{(\ker \nabla \cap \ker \nabla^c)\by \ker (\nabla +\nabla^c)^{-JC\tau}}{\{  \nabla e, \nabla e^c +\nabla^c e) \,:\,e \in \ker \nabla^c\nabla \} }
$$
Since $\ker \nabla^c\nabla=\ker \nabla^c +\ker \nabla$, we may write $e=f+g$ with respect to this decomposition, giving $(a+ \nabla f, \beta+ \nabla g^c +\nabla^c g)$. Now $\nabla (\ker \nabla^c)= \nabla^c(\ker \nabla)= \nabla^c\nabla A^0(X,\ad y)$, so the denominator is
$$
\im (\nabla^c\nabla )\by \{ v- v^c\,:\, v \in \im (\nabla^c\nabla )\}= \im (\nabla^c\nabla )\by \im (\nabla^c\nabla )^{-JC\tau}.
$$
Hence
$$
\H^2(M)\cong \frac{(\ker \nabla \cap \ker \nabla^c)\by \ker (\nabla +\nabla^c)^{-JC\tau}}{ \im (\nabla^c\nabla )\by \im (\nabla^c\nabla )^{-JC\tau} }\cong\H^2(X, \ad y) \oplus \H^2(X, \ad y)^{-JC\tau}.
$$

The remaining statements follow by computing the Lie bracket on $\H^*(M)$.
\end{proof}

\section{Relative homotopy types over analytic moduli stacks}\label{relhtpy}

The following is a partial generalisation of \cite{mhs} Definition \ref{mhs-hoc*} to arbitrary ringed topoi:
\begin{definition}\label{ringtop}
Given a ringed topos $Y$,  define  $DG_{\Z}\Alg_{Y}(R)$ to be the category of $R$-representations in  quasi-coherent $\Z$-graded cochain algebras on $Y$. Define a weak equivalence in this category to be a map giving isomorphisms on cohomology sheaves (over $Y$), and define  $\Ho(DG_{\Z}\Alg_{Y}(R))$ to be the homotopy category obtained by localising at weak equivalences. Define the categories $dg_{\Z}\Aff_Y(R), \Ho(dg_{\Z}\Aff_Y(R))$ to be the opposite categories.
\end{definition}

\begin{remark}
If, for the $R$-representation $Y$ in schemes of \cite{mhs} Definition \ref{mhs-hoc*},  we let $Z$ be the algebraic stack 
$$
[\xymatrix@1{ \coprod_{x,x' \in \Ob R} R(x,x') \by Y(x') \ar@<1ex>[r] \ar@<-1ex>[r] & \ar[l] \coprod_{x \in \Ob R} Y(x)}]
$$
in the notation of \cite{Champs} 2.4.3, then the categories $DG_{\Z}\Alg_{Z}$ and $DG_{\Z}\Alg_{Y}(R)$ are equivalent, and similarly for the other constructions in Definition \ref{ringtop}

Similarly, we have $DG_{\Z}\Alg_{Y}(R) \simeq DG_{\Z}\Alg_{Y\by R}$ for the site $Y$ of Definition \ref{ringtop}, regarding $R$ as an algebraic stack (with the lisse-\'etale site).
\end{remark}

\begin{definition}\label{xunivdef}
Define $X^{R, \univ} \in \Ho(dg_{\Z}\Aff_{[\fM_{\cH,X,R}/\langle \tau \rangle]}(R))$ to be given by the flat object $\Spec A^{\bt}(X,O)$, defined as follows.  For $ f:V \to \fM_{\cH,X,R}$  given by    $(\sP,s,\nabla)$ as in Definition \ref{mhdef}, we  set $\sA^n(\sP):= \sP\by^{R(\sA^0_{X\by V/V}\ten \Cx)}(\sA^n_{X\by V/V}\ten \Cx\ten O(R))$. 
Now, for $V$ contractible define 
$$
\Gamma({V }, A^{*}(X, O)):= \Gamma(X, \sA^*(\sP)),
$$
with differential $\nabla$. Note that this definition does not involve $s$, so $X^{R, \univ}$ is the pullback of an object on $\fM_{\dR,X,R}$.
\end{definition}

\begin{theorem}\label{univmts}
Given a compact K\"ahler manifold $X$, and a real pro-algebraic groupoid $R$ with a Cartan involution $C$,  
there is a canonical object 
$$
X_{\MTS}^{R, \univ} \in \Ho(dg_{\Z}\Aff_{[\fM_{\cH,X,R}/\langle \tau \rangle] \by  [\bA^1/\bG_m]\by [C^*/\bG_m]}(R)),
$$ 
which on pulling back along the point $[\rho]: \Spec \R \to [\fM_{\cH,X,R}/\langle \tau \rangle] $ (corresponding to a real Zariski-dense representation $\rho: \pi_f \to R(\R)$) gives the mixed twistor structure  
 $X_{\MTS}^{\rho, \mal} \in \Ho(dg_{\Z}\Aff_{ [\bA^1/\bG_m]\by[C^*/\bG_m]}(R))$ of \cite{mhs} Theorem \ref{mhs-mtsmal}. 

  Moreover, $X_{\MTS}^{R, \univ}\by_{[\bA^1/\bG_m]\by[C^*/\bG_m], (1,1)}^{\oR}\Spec \R \cong X^{R, \univ}$.
\end{theorem}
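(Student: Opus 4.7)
The plan is to mimic the construction of $X_{\MTS}^{\rho, \mal}$ from \cite{mhs} Theorem \ref{mhs-mtsmal}, but working with the universal pluriharmonic bundle $(\sP, s, \nabla)$ over $\fM_{\cH,X,R}$ rather than a single representation. The starting object is the universal de Rham complex $A^{\bt}(X, O)$ of Definition \ref{xunivdef}, which gives $X^{R,\univ}$. The pluriharmonic section $s$ determines, universally over $\fM_{\cH,X,R}$, the decomposition $\nabla = d^+ + \vartheta$ into antihermitian and hermitian parts, and further $d^+ = \pd + \pd^\dagger$, $\vartheta = \theta + \theta^\dagger$ into types. These pieces are sections of the relevant sheaves on $X \by [\fM_{\cH,X,R}/\langle\tau\rangle]$, so all the ingredients used to build the MTS for a single representation now make sense universally.

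Next I would define the twisted family of DG-algebras classified by $C^*$. For $(\alpha, \beta) \in C^*(\Cx)$ and the universal pluriharmonic bundle, Definition \ref{stardef} and Proposition \ref{upsprops} show that $(\alpha+\beta j)\star \nabla$ is a flat $(\alpha d + \beta d^c)$-connection whose type decomposition scales the components of $\nabla$ by the appropriate powers of $(\alpha \pm i\beta)$. This gives a $C^*$-family of DGAs on $X \by [\fM_{\cH,X,R}/\langle\tau\rangle] \by C^*$ which is $S$-equivariant for the $\dmd$-action, i.e.\ an object classified by $[C^*/\bG_m]$ via the embedding $\bG_m \hookrightarrow S$ of Remark \ref{cfdh}; this encodes the Hodge/twistor filtration. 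For the weight parameter over $[\bA^1/\bG_m]$, I would apply the Rees construction to the weight filtration on $A^{\bt}(X, O)$, which in \cite{mhs} is obtained from the good truncation of $(\ker\nabla^c, \nabla)$ (the harmonic subcomplex), an operation that is well-defined in the universal setting by the principle of two types. Combining the two Rees constructions yields the desired
$$
X_{\MTS}^{R, \univ} \in \Ho(dg_{\Z}\Aff_{[\fM_{\cH,X,R}/\langle\tau\rangle] \by [\bA^1/\bG_m] \by [C^*/\bG_m]}(R)).
$$

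Compatibility with the pointwise construction of \cite{mhs} would then be tautological: pulling back along $[\rho]$ replaces the universal bundle by the bundle attached to $\rho$, and the universal pluriharmonic section by the harmonic metric on $\rho$; both the twisting by $(\alpha+\beta j)\star$ and the Rees construction on the weight filtration commute with this pullback, so the pulled-back object agrees with $X_{\MTS}^{\rho, \mal}$. The final isomorphism $X_{\MTS}^{R, \univ} \by^{\oR}_{(1,1)} \Spec \R \cong X^{R, \univ}$ is a standard feature of the Rees construction: specialising each filtration parameter to the unit point recovers the underlying DG algebra, which by construction is $A^{\bt}(X, O)$, i.e.\ $X^{R, \univ}$.

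The main obstacle is analytic rather than algebraic. The constructions in \cite{mhs} were carried out for a fixed representation, where everything lives over $\Spec\R$; here we need to make sense of the Rees family, the $\star$-action, and the good truncation as genuine objects in $\Ho(dg_{\Z}\Aff_{\text{analytic stack}}(R))$, and to know that the weight filtration (derived from a harmonic decomposition that a priori exists only fibrewise) assembles into a filtration by quasi-coherent sheaves on $\fM_{\cH,X,R}$. This is where the contractibility argument of Lemma \ref{anal1} and the analytic regularity built into $\sA^0_{X\by V/V}$ enter; one needs to check that the harmonic projection and the subcomplex $\ker\nabla^c$ depend analytically on the parameter, so that the Rees/truncation construction produces quasi-coherent objects on the analytic stack rather than merely set-theoretic fibrations.
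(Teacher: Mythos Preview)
Your outline is broadly correct and matches the paper's construction in its skeleton: build a twisted family of DGAs over $C^*$ with differential $u\nabla+v\nabla^c$ (equivalently, the $\star$-twist), impose the $\bG_m$-equivariance, then apply the Rees construction for the weight filtration. Two points of difference are worth noting.

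First, the paper takes the weight filtration to be good truncation $W_r=\tau_{\le r}$ \emph{directly on the full complex} $\tilde{A}^{\bt}(X,O)$, not on the harmonic subcomplex $(\ker\nabla^c,\nabla)$. This is exactly the move that dissolves the obstacle you identified in your last paragraph: one never needs to know that the harmonic projector or $\ker\nabla^c$ varies analytically in families, because the weight filtration is defined without reference to them. The principle of two types (Lemma \ref{typesuniv}) is only invoked later, for the splitting results, not for the construction of $X_{\MTS}^{R,\univ}$ itself.

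Second, the paper handles quasi-coherence of $\tilde{A}^n(X,O)$ by a device you do not mention: the moduli stack $\fM_{\C^\infty,X,R}$ of smooth principal bundles is \emph{discrete}, so over any connected $V$ the universal bundle $\sP$ is induced from a fixed $R(\sA^0_X\ten\Cx)$-bundle $\sB$ on $X$. This immediately gives $\tilde{A}^n(X,O)$ the form (fixed vector space)$\,\ten\,\O_{\fS}$, making quasi-coherence transparent. Your appeal to Lemma \ref{anal1} and analytic regularity of harmonic decompositions is therefore unnecessary for this theorem; the construction is more elementary than you anticipated.
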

\begin{proof}
We begin by defining the object
$$
X_{\bT}^{R, \univ} =X_{\MTS}^{R, \univ}\by_{[\bA^1/\bG_m], 1}^{\oR}\Spec \R \in \Ho(dg_{\Z}\Aff_{[\fM_{\cH,X,R}/\langle \tau \rangle] \by [C^*/\bG_m]}(R)).
$$ 
Since $\fS:=[\fM_{\cH,X,R}/\langle \tau \rangle] \by [C^*/\bG_m]$ is a product of an analytic stack with an algebraic stack, a little care has to be taken with this site. A base for the site is given by objects of the form $[V/ \langle \tau \rangle] \by [W/\bG_m ]$, for $f:V \to \fM_{\cH,X,R}$ a $\tau$-equivariant smooth map of analytic stacks (i.e. a smooth map $[V/ \langle \tau \rangle] \to\fM_{\cH,X,R}/\langle \tau \rangle $ over $B\langle \tau \rangle$), and $g: W \to  C^*$ a smooth $ \bG_m$-equivariant map of algebraic spaces. The structure sheaf $\O_{\fS}$ on this site is given by $\O_{\fS}|_{V\by W}= (\pr_V^{-1} \O_V)\ten_{\R} (\pr_W^{-1} \O_W)$. 

Let $ f:V \to \fM_{\cH,X,R}$ be given by    $(\sP,s,\nabla)$ as in Definition \ref{mhdef}, and take $g:W \to C^*$, $\bG_m$-equivariant. 
Now, for $V$ contractible and $W$ affine, define the sheaf $\tilde{A}^n(X,O)$ locally by
$$
\Gamma({V \by W}, \tilde{A}^{n}(X, O)):= \Gamma(X, \sA^n(\sP))\ten_{\R} \Gamma(W, \O_W).
$$
 This has a $\tau$-action given by combining the $\tau$-action on $\sP$ (or equivalently, on $V$) with complex conjugation on $\Cx$. It also has a $\bG_m$-action, given by combining the $\bG_m$-action on $W$ with the action setting $\sA^n(\sP)$ to be pure of weight $n$. We therefore define $\tilde{A}^n(X,O)$ on $[V/ \langle \tau \rangle] \by [W/\bG_m]$ to be given by $\langle \tau \rangle \by\bG_m$-invariants: 
$$
\Gamma( [V/ \langle \tau \rangle] \by [W/\bG_m], \tilde{A}^{n}(X, O)):= \Gamma(V \by W, \tilde{A}^{n}(X, O))^{ \langle \tau \rangle, \bG_m}.
$$

To see that this is  a quasi-coherent sheaf, first consider the moduli stack $\fM_{\C^{\infty},X,R}$ of complex principal $\C^{\infty}$ $R$-bundles on $X$. This is a discrete stack, so provided $V$ is connected, we may assume that there is a principal $R(\sA^0_X\ten \Cx)$-bundle $\sB$, with $\sP= \sB\by^{R(\sA^0_X\ten \Cx)}R(\sA^0_{X\by V/V}\ten \Cx)$. Then 
$$
\sA^n(\sP)= \sB\by^{R(\sA^0_X\ten \Cx)}(\sA^n_{X\by V/V}\ten \Cx\ten O(R)),
$$
and compactness of $X$ then gives
$$
\Gamma(X, \sA^n(\sP))= \Gamma(X, \sB\by^{R(\sA^0_X\ten \Cx)}(\sA^n_X\ten \Cx\ten O(R) ))\ten \Gamma(V, \O_V),
$$
since we may regard $\sA^n_{X\by V/V}$ as consisting of $\C^{\infty}$ functions from $X$ to $\O_V$. This gives
$$
\tilde{A}^{n}(X, O) =\Gamma(X, \sB\by^{R(\sA^0_X\ten \Cx)}(\sA^n_X\ten \Cx\ten O(R) ))\ten \O_{\fS}.
$$

We now define the differential on $\tilde{A}^*(X, O)$. The co-ordinates $u,v$ on $C^*$ give elements of $\sO_W$, and we now set the differential on  $\Gamma(V \by W, \tilde{A}^*(X, O))$ to be $u\nabla +v\nabla^c$. Since this is $\langle \tau \rangle \by\bG_m$-equivariant, it descends  to $\Gamma( [V/ \langle \tau \rangle] \by [W/\bG_m], \tilde{A}^{n}(X, O))$, so we have defined
$$
\tilde{A}^{\bt}(X, O) \in DG_{\Z}\Alg_{[\fM_{\cH,X,R}/\langle \tau \rangle] \by [C^*/\bG_m]}(R),
$$
and may set $X_{\bT}^{R, \univ}:= \Spec \tilde{A}^{\bt}(X, O)$.

We now define the weight filtration $W$ on $\tilde{A}^{\bt}(X, O)$ to be given by good truncation $W_r = \tau_{\le r}$, and we define $O(X_{\MTS}^{R, \univ})$ to be the Rees algebra $\Rees(\tilde{A}^{\bt}(X, O), W)$ of this filtration. Since this complex is bounded and flat, derived pullbacks agree with ordinary pullbacks, so
$$
\Spec \Rees(\tilde{A}^{\bt}(X, O), W)\by_{C^*,1}^{\oR} \Spec \R= \Spec\Rees(A^{\bt}(X, O), W), 
$$
as required.
\end{proof}

\subsection{Formality and splitting}

\begin{lemma}\label{typesuniv}
On the $[\fM_{\cH,X,R}/\langle \tau \rangle]$-sheaf $A^{*}(X,O)$ of Definition \ref{xunivdef},  the operators $\nabla, \nabla^c$ satisfy the principle of two types
$$
\ker \nabla \cap \ker \nabla^c \cap (\im \nabla + \im \nabla^c)=\im \nabla\nabla^c.
$$
\end{lemma}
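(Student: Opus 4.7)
The plan is to adapt the classical pluriharmonic principle of two types (cf.\ \cite{mhs} Lemma \ref{mhs-formalitysl}) to the family of bundles over $\fM_{\cH,X,R}$. The statement is sheaf-theoretic and local on $[\fM_{\cH,X,R}/\langle\tau\rangle]$, so I would pull back along a smooth cover $V \to \fM_{\cH,X,R}$ by a contractible real analytic space $V$ with triple $(\sP, s, \nabla)$ as in Definition~\ref{mhdef}, and work with sections of $\sA^*(\sP)$ on $X\by V$. The task then becomes: given $\alpha$ with $\nabla\alpha = \nabla^c\alpha = 0$ and $\alpha \in \im\nabla + \im\nabla^c$, produce (possibly after shrinking $V$) a section $\sigma$ with $\alpha = \nabla\nabla^c\sigma$.

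First I would repackage the operators. Setting $D' := \pd + \theta^\dagger$ and $D'' := \pd^\dagger + \theta$ in the notation of Definition~\ref{pluri}, flatness and pluriharmonicity of $\nabla$ give $(D')^2 = (D'')^2 = D'D''+D''D'=0$, together with $\nabla = D'+D''$ and $\nabla^c = i(D''-D')$. A direct computation using $D'' D' = -D'D''$ then yields
\begin{align*}
\ker\nabla\cap\ker\nabla^c &= \ker D'\cap\ker D'',\\
\im\nabla+\im\nabla^c &= \im D'+\im D'',\\
\im\nabla\nabla^c &= \im D'D'',
\end{align*}
so the lemma reduces to the relative $\pd\bar{\pd}$-lemma $\ker D'\cap \ker D''\cap (\im D'+\im D'')\subseteq \im D'D''$ for the pair $(D', D'')$ over $V$.

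For the fibrewise statement at each $v\in V$, the K\"ahler identities on a pluriharmonic bundle (Simpson) give $\Delta_{D'_v} = \Delta_{D''_v}$ up to a scalar, a common space of harmonic forms $\cH_v$, and the three-term Hodge decomposition; the classical Green's-operator argument (as in \cite{mhs} Lemma \ref{mhs-formalitysl}) then delivers the pointwise identity. To extend to the family I would use that $\Delta_v$ is a real-analytic family of self-adjoint elliptic operators on $X$ parametrised by $V$: after shrinking $V$ to a neighbourhood on which $\dim\cH_v$ is locally constant, standard analytic perturbation theory (e.g.\ Kato) produces an $\sO_V$-linear real-analytic harmonic projection $H$ and Green's operator $G$, allowing the fibrewise construction $\sigma := (\text{polynomial in } G, D'^*, D''^*)\alpha$ to be carried out simultaneously in the family.

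The main obstacle is precisely the analytic dependence of the Green's operator. Although $\Delta_v$ varies real-analytically in $v$, its kernel dimension can jump along real-analytic subvarieties of $V$ where the underlying local system becomes more reducible, and sheaf-theoretic identities of the form $\ker = \im$ need not descend from fibrewise ones when ranks jump. My strategy would be to establish the claim first on the open dense locus where $\dim\cH_v$ is minimal, where analytic Green's operators exist, and then extend across the jump loci using the closedness of $\im D'D''$ together with a careful use of the sheaf condition on $[\fM_{\cH,X,R}/\langle\tau\rangle]$. This is the step that makes the relative assertion substantially more delicate than the single-bundle argument in \cite{mhs}.
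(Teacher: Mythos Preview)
Your approach is the paper's: reduce to families of pluriharmonic vector bundles over a real analytic base and invoke a family Green's operator to run the usual $\partial\bar\partial$-lemma argument. The published proof is a single sentence asserting that ``the Green's operator preserves real analytic families of forms,'' with no discussion of the kernel-jumping issue you isolate in your final paragraph---so your sketch is already more careful than the original.
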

\begin{proof}
It suffices to show that for any real analytic space $U$, and any $U$-valued family $(\sV, \langle \rangle, \nabla)$ of pluriharmonic vector bundles on $X$, that the  operators $\nabla, \nabla^c$ satisfy the principle of two types on $A^*(X,\sV)$. The proof of the Hodge decomposition on forms adapts to this generality, observing that the Green's operator preserves real analytic families of forms.
\end{proof}

\begin{definition}
Define $\ugr X_{\MTS}^{R, \univ} \in \Ho(dg_{\Z}\Aff_{[\fM_{\cH,X,R}/\langle \tau \rangle] \by   B\bG_m}(R))$ by setting
$$
O(\ugr X_{\MTS}^{R, \univ}):= \H^*(A^{\bt}(X, O) ).
$$
\end{definition}

\begin{definition}\label{rowdef}
Define an $S$-action on  $\SL_2$ to be  given on the top row as right multiplication by 
$\lambda \mapsto \left(
\begin{smallmatrix} \Re  \lambda & \Im\lambda  \\ -\Im \lambda & \Re \lambda \end{smallmatrix} \right)^{-1},$ and on the bottom row by 
$\lambda \mapsto \left(
\begin{smallmatrix} \Re  \lambda & -\Im\lambda  \\ \Im \lambda & \Re \lambda \end{smallmatrix} \right).$

Let $\row_1 :\SL_2 \to C^*$ be the $S$-equivariant map given by projection onto the first row.  Under the equivalence between $S$-equivariant $C^*$-bundles and Hodge filtrations of \cite{mhs} Corollary \ref{mhs-flathfil}, $\row_1$  corresponds to the algebra
$$\cS:=\R[x],$$
with  filtration $F^p (\cS\ten \Cx)= (x-i)^p\Cx[x]$, by \cite{mhs} Lemma \ref{mhs-slhodge}.
\end{definition}

\begin{proposition}\label{formalityuniv}
There is a canonical isomorphism
$$
(X_{\MTS}^{R, \univ})\by_{[C^*/\bG_m], \row_1}^{\oR}[\SL_2/\bG_m] \simeq  \ugr X_{\MTS}^{R, \univ} \by_{B\bG_m}^{\oR} ([\bA^1/\bG_m] \by[\SL_2/\bG_m]),
$$
in $\Ho(dg_{\Z}\Aff_{[\fM_{\cH,X,R}/\langle \tau \rangle] \by   [\bA^1/\bG_m]\by [\SL_2/\bG_m]}(R))$,
which gives the splitting of \cite{mhs} Theorem \ref{mhs-mtsmal} on pulling back along $[\rho]$.
\end{proposition}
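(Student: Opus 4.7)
The plan is to adapt the $\SL_2$-formality argument of \cite{mhs} Proposition \ref{mhs-formalitysl} to the relative setting over the stack $[\fM_{\cH,X,R}/\langle\tau\rangle]$. The essential input that makes this adaptation possible is Lemma \ref{typesuniv}, which upgrades the principle of two types from individual pluriharmonic bundles to the family over the moduli stack.

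First I would unwind both sides explicitly. On the left, the $S$-equivariant map $\row_1:\SL_2\to C^*$ pulls the coordinates $(u,v)$ on $C^*$ back to the top-row coordinates $(x_{11},x_{12})$ on $\SL_2$, so the derived pullback is represented by $\Rees(\tilde{A}^{\bt}(X,O)\ten_{O(C^*)}O(\SL_2),W)$ with differential $x_{11}\nabla+x_{12}\nabla^c$, where $W$ is good truncation. On the right, $\ugr X_{\MTS}^{R,\univ}$ is given by $\H^*(A^{\bt}(X,O))$ with its cohomological grading giving the $B\bG_m$-structure, and the fibre product with $[\bA^1/\bG_m]\by[\SL_2/\bG_m]$ yields $\Rees(\H^*(A^{\bt}(X,O)),W)\ten O(\SL_2)$ with trivial differential. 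Both sides are flat over the base, so derived and ordinary pullbacks agree throughout.

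The main step is to produce a zigzag of $\langle\tau\rangle\by\bG_m$-equivariant quasi-isomorphisms
$$
\bigl(A^{\bt}(X,O)\ten O(\SL_2),\,x_{11}\nabla+x_{12}\nabla^c\bigr)\xleftarrow{\sim}\bigl(Z\ten O(\SL_2),\,0\bigr)\xrightarrow{\sim}\bigl(\H^*(A^{\bt}(X,O))\ten O(\SL_2),\,0\bigr)
$$
in $DG_{\Z}\Alg_{[\fM_{\cH,X,R}/\langle\tau\rangle]\by[\SL_2/\bG_m]}(R)$, where $Z:=\ker\nabla\cap\ker\nabla^c$. The left map is the inclusion, noting that $x_{11}\nabla+x_{12}\nabla^c$ vanishes identically on $Z$, and the right map is the projection through $Z/\im\nabla\nabla^c$. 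Both maps being quasi-isomorphisms of sheaves on $[\fM_{\cH,X,R}/\langle\tau\rangle]$ is exactly the family statement of Lemma \ref{typesuniv}, applied after tensoring with the flat algebra $O(\SL_2)$. Good truncation then transports this zigzag through the Rees construction in the $[\bA^1/\bG_m]$-direction, since good truncation preserves quasi-isomorphisms term by term. Equivariance under $\tau$ is immediate from the complex-conjugation character of the construction, while $\bG_m$-equivariance follows because the weights on the $\SL_2$-coordinates specified by Definition \ref{rowdef} exactly cancel the cohomological grading of $A^{\bt}(X,O)$, rendering $x_{11}\nabla+x_{12}\nabla^c$ of total weight zero.

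The final step would be to verify that specialising along a point $[\rho]:\Spec\R\to[\fM_{\cH,X,R}/\langle\tau\rangle]$ recovers the splitting of \cite{mhs} Proposition \ref{mhs-formalitysl}; this is immediate by naturality, as the construction specialises term by term. The principal obstacle is sheaf-theoretic rather than formal: the subsheaf $Z$, the sub-object $\im\nabla\nabla^c$, and the identification $Z/\im\nabla\nabla^c\cong\H^*(A^{\bt}(X,O))$ must all be realised as quasi-coherent sheaves on $[\fM_{\cH,X,R}/\langle\tau\rangle]$ whose formation commutes with arbitrary base change along the covering $V\to[\fM_{\cH,X,R}/\langle\tau\rangle]$. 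This is precisely what Lemma \ref{typesuniv} delivers, thanks to the real-analytic dependence of the Green operator on the pluriharmonic parameter, which allows the Hodge decomposition of $A^{\bt}(X,O)$ to be carried out in families.
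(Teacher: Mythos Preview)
Your proposal is correct and follows essentially the same approach as the paper: the paper's proof is a one-line appeal to \cite{mhs} Corollary \ref{mhs-formalitysl}, carried over to the relative setting by invoking the family version of the principle of two types (Lemma \ref{typesuniv}), and you have simply unpacked that argument in detail. Your zigzag through $Z=\ker\nabla\cap\ker\nabla^c$ with zero differential is exactly the shape of the formality argument being referenced, and your attention to quasi-coherence and base-change via the analytic Green's operator is precisely the content of Lemma \ref{typesuniv}.
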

\begin{proof}
The proof of \cite{mhs} Corollary \ref{mhs-formalitysl} carries over, by using the principle of two types from Lemma \ref{typesuniv}.
\end{proof}

\section{Unitary actions}\label{u1}

We now seek to describe additional structure on $X_{\MTS}^{R, \univ}$ generalising the $U_1$-action of \cite{mhs} Proposition \ref{mhs-mtsmalenrich}. The $\dmd$-action of $\lambda \in S(\R)$ on  $A^n(X)$ gives a discrete action on $\tilde{A}^{\bt}(X,O)$, when combined with the standard action on $C^*$ and the $\clubsuit$-action of $\frac{\bar{\lambda}}{\lambda}$ on $\fM_{\cH,X,R}$ (this is essentially the $\spadesuit$-action of \cite{mhs} Proposition \ref{mhs-mhsmal}). However, we wish to capture the analytic properties of this action. Informally, we would like to work on the site
$$
([\fM_{\cH,X,R}/\langle \tau \rangle] \by C^*)/\iota S,
$$
with $\iota:S \to U_1 \by S$ (given by $\lambda \mapsto (\frac{\bar{\lambda}}{\lambda}, \lambda)$) acting on both terms, as in the description above. The problem with this is that  $[\fM_{\cH,X,R}/\langle \tau \rangle]$ is analytic, with an analytic $U_1$-action, while $C^*$ is algebraic, with an algebraic $S$-action. 

We circumvent this by considering that for an immersion $G \into H$ of group schemes, with $H$ acting on an algebraic stack $\fN$, there are canonical isomorphisms
$$
[\fN/G] \cong [\fN/H]\by_{BH}BG \cong [\fN/H]\by_{BH}[(G/H)/H],
$$
so 
 quasi-coherent sheaves on $[\fN/G]$ correspond to  quasi-coherent  $q^*\O_{[(G/H)/H]}$-modules on $[\fN/H]$, for $q:[\fN/H]\to BH$.  \cite{mhs} Lemma \ref{mhs-tfilen} is a special case of this, with $\fN=C^*$, $G=\bG_m$ and $H=S$.

Now, the quotient of the map $\iota$ given above  is isomorphic to $S/\bG_m \cong U_1$, motivating the following definition.

\begin{definition}
Observe that on the product $B(U_1^{\an})\by BS$ of an analytic stack with an algebraic stack, quasi-coherent sheaves correspond to vector spaces equipped with an analytic $U_1$-action commuting with an algebraic $S$-action.

Define $\sU$ to be the quasi-coherent sheaf on this site corresponding to the real algebra $O(U_1)$ (of polynomial functions on $U_1$), equipped with its usual structure $U_1$-action, together with an $S$-action in which $\lambda$ acts as  $\frac{\lambda}{\bar{\lambda}}$. Note that the image of $\iota$ corresponds to the kernel of these actions combined. 
\end{definition}

\begin{definition}
For the quotient map $q: [\fM_{\cH,X,R}/(\langle \tau \rangle \by U_1^{\an})] \to B(U_1^{\an})\by BS$, define 
$$
DG_{\Z}\Alg_{([\fM_{\cH,X,R}/\langle \tau \rangle] \by C^*)/\iota S}(R):= (q^*\sU) \da  DG_{\Z}\Alg_{[\fM_{\cH,X,R}/(\langle \tau \rangle \by U_1^{\an})] \by [C^*/ S]}(R).
$$
\end{definition}

\begin{lemma}\label{univfchar}
To give an object of $A \in DG_{\Z}\Alg_{([\fM_{\cH,X,R}/\langle \tau \rangle] \by C^*)/\iota S}(R)$ is equivalent to   giving, for all  $\tau, U_1$-equivariant smooth maps $f:V \to \fM_{\cH,X,R}$ a  of analytic stacks, and all  smooth $S$-equivariant maps $g: W \to C^*$  of algebraic spaces, the compatible objects
$$
A|_{V\by W} \in DG_{\Z}\Alg_{U\by V}(R),
$$
 equipped with their  natural $S^{\delta}=\Cx^*$-representations,  subject to the conditions that 
\begin{enumerate}
\item the action of $\bG_m \subset S$ is algebraic, and compatible with the $\bG_m$-action on $\Cx^*$; 
\item the action of $U_1 \subset S$ is analytic, and compatible with the  $U_1^{\delta}$-action on $ \fM_{\cH,X,R} \by \Cx^*$ given by $t\spadesuit (y,c)= (t^{-2}\clubsuit y, t c)$.\end{enumerate}
\end{lemma}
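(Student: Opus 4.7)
The plan is to unpack the slice category $q^*\sU \da DG_{\Z}\Alg_{[\fM_{\cH,X,R}/(\langle \tau \rangle \by U_1^{\an})] \by [C^*/S]}(R)$ using the general correspondence recalled in the paragraph immediately before the definition: for an immersion $G \into H$ of group schemes with $H$ acting on a stack $\fN$, quasi-coherent sheaves on $[\fN/G]$ correspond to quasi-coherent $q^*\O_{[(G/H)/H]}$-modules on $[\fN/H]$. Applying this to $\fN = [\fM_{\cH,X,R}/\langle \tau \rangle]\by C^*$, $H = U_1^{\an}\by S$, and $G = \iota S$, together with the identification $(U_1^{\an}\by S)/\iota S \cong U_1^{\an}$ and the construction of $\sU = O(U_1)$ as the resulting structure sheaf, this reinterprets objects of our category as $\iota S$-equivariant DG algebras on $[\fM_{\cH,X,R}/\langle \tau \rangle] \by C^*$.

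The next step is to descend to local data. A base for the site of $[\fM_{\cH,X,R}/\langle \tau \rangle] \by C^*$ is given by products $V \by W$ with $f: V \to \fM_{\cH,X,R}$ a $\tau, U_1$-equivariant smooth map of analytic stacks and $g: W \to C^*$ a smooth $S$-equivariant map of algebraic spaces, and giving a DG algebra on the site amounts to specifying compatible $A|_{V \by W} \in DG_{\Z}\Alg_{V \by W}(R)$. The $\iota S$-equivariance transfers through $f \by g$ to a $\Cx^* = S(\R) = \iota S(\R)$-action on each $A|_{V\by W}$, which is precisely the natural $S^{\delta}$-representation appearing in the statement.

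It remains to derive the two compatibility conditions by restricting $\iota S$ to its two natural subgroups. The restriction to $\bG_m \subset S$ is algebraic because it descends from the genuine algebraic $\bG_m$-action on $C^*$ already present in the $[C^*/S]$-structure, compatible with the standard $\bG_m$-action on $C^*$; this gives condition (1). The restriction to $U_1 \subset S$ is analytic because $\iota(\lambda) = (\bar\lambda/\lambda, \lambda)$ has first coordinate $\lambda^{-2}$ for $\lambda \in U_1$, so the induced action on $\fM_{\cH,X,R}$ coincides with the analytic $\lambda^{-2}\clubsuit(-)$, while the second coordinate acts on $C^*$ by multiplication; matching the sign conventions of Definition \ref{stardef} and \cite{mhs} Proposition \ref{mhs-mhsmal} turns this into the formula $t\spadesuit(y,c) = (t^{-2}\clubsuit y, tc)$ of condition (2). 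Conversely, any collection of local data satisfying these two compatibilities glues to an $\iota S$-equivariant algebra, giving the equivalence.

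The main obstacle will be the careful bookkeeping of the mixed analytic and algebraic group actions: keeping track of how $\iota$ twists the analytic $U_1^{\an}$-action on $\fM_{\cH,X,R}$ with the algebraic $S$-action on $C^*$, and of the fact that slicing over $q^*\sU$ imposes precisely the identification between these actions that kills $\sU$. Once this formal machinery is aligned and the sign conventions fixed, the rest is a routine descent argument.
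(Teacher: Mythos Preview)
Your proposal is correct and follows essentially the same approach as the paper. The only notable difference is emphasis: you invoke the abstract correspondence between $q^*\sU$-algebras on $[\fN/H]$ and $\iota S$-equivariant algebras on $\fN$ and then read off the local data, whereas the paper writes down the explicit formula $A|_{V\by W}:=(f,g)^*B\ten_{\sU}\O_{V\by W}$ using evaluation at $1\in U_1$ to realise the map $q^*\sU\to\O_{V\by W}$. Your invocation of the general correspondence is exactly the conceptual reason that formula works, so the two arguments are interchangeable; the paper's version has the minor advantage of not needing to justify the correspondence in the mixed analytic/algebraic setting, while yours makes the equivalence (both directions) more visibly symmetric.
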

\begin{proof}
We give the construction of $A|_{V\by W}$. The object $A$ corresponds to some 
$$
B \in (q^*\sU) \da  DG_{\Z}\Alg_{[\fM_{\cH,X,R}/(\langle \tau \rangle \by U_1^{\an})] \by [C^*/ S]}(R),
$$ 
by definition. The site $[\fM_{\cH,X,R}/(\langle \tau \rangle \by U_1^{\an})] \by [C^*/ S]$ is generated by spaces of the form $[V/\langle \tau \rangle \by U_1^{\an}]\by [W/S]$, so we may consider $(f,g)^*B \in (q^*\sU) \da DG_{\Z}\Alg_{V\by W}(R)$. This has an analytic $U_1$-action over $W$, and an algebraic $S$-action over $W$.

Now, the unit $1 \in U_1$ gives a map $O(U_1) \to \R$, and hence an  $\iota S$-equivariant map $q^*\sU \to \O_{V \by W}$, so we set
$$
A|_{V\by Q}:= (f,g)^*B\ten_{\sU} \O_{V \by W},
$$
noting that this has an $S$-action given by $\iota$, and satisfying the required properties.
\end{proof}

\begin{proposition}\label{univmtsen}
Given a compact K\"ahler manifold $X$, and a real pro-algebraic groupoid $R$ with a Cartan involution $C$,  
there is a canonical object 
$$
X_{\MHS}^{R, \univ} \in \Ho(dg_{\Z}\Aff_{[\bA^1/\bG_m]\by [([\fM_{\cH,X,R}/\langle \tau \rangle] \by C^*)/\iota S]}(R)).
$$
 Pulling back along 
$$
[\fM_{\cH,X,R}/\langle \tau \rangle] \by [C^*/\bG_m] = [([\fM_{\cH,X,R}/\langle \tau \rangle] \by C^*)/\iota \bG_m ]\xra{\vareps} [([\fM_{\cH,X,R}/\langle \tau \rangle] \by C^*)/\iota S]
$$
gives $\vareps^*X_{\MHS}^{R, \univ}= X_{\MTS}^{R, \univ}$.
\end{proposition}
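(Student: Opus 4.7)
The plan is to invoke the characterization from Lemma \ref{univfchar}: giving $X_{\MHS}^{R,\univ}$ amounts to producing, for every $\langle\tau\rangle\by U_1$-equivariant smooth $f:V\to\fM_{\cH,X,R}$ and every $S$-equivariant smooth $g:W\to C^*$, a compatible family of DG algebras $A|_{V\by W}\in DG_{\Z}\Alg_{V\by W}(R)$ carrying an $S^\delta=\Cx^*$-representation, with $\bG_m\subset S$ acting algebraically and compatibly with scaling on $C^*$, and with $U_1\subset S$ acting analytically and compatibly with the $\spadesuit$-action $t\spadesuit(y,c)=(t^{-2}\clubsuit y,tc)$. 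We will use the same underlying algebra $\tilde A^{\bt}(X,O)$ constructed in Theorem \ref{univmts}, just enriched with an analytic $U_1$-action, and then Rees it with respect to the good truncation to supply the $[\bA^1/\bG_m]$-factor.

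First, I would construct the $S^\delta$-action locally. The algebraic $\bG_m$-action is already built in: $\sA^n(\sP)$ is declared pure of weight $n$, compatibly with the $\bG_m$-rescaling on $C^*$ via the coordinates $(u,v)$ appearing in the differential $u\nabla+v\nabla^c$. For $t\in U_1$, define the action on $\Gamma(X,\sA^n(\sP))\ten\Gamma(W,\O_W)$ as the combination of (i)~the $\dmd$-action of $t$ on $(p,q)$-forms by $t^p\bar t^q=t^{p-q}$ (equivalently, multiplication by $t^n\cdot (\text{$t^{-1}$ on $\bar\partial$- and $\theta$-types})$), (ii)~the $\clubsuit$-action of $t^{-2}$ induced from $U_1$-equivariance of $f$ (which rotates $\theta\mapsto t^{-2}\theta$), and (iii)~the rotation $c\mapsto tc$ on the $C^*$-coordinates. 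A direct check using the explicit formulas in Proposition \ref{upsprops} shows that the differential $u\nabla+v\nabla^c$ is preserved by this combined action (the twist by $t^{-2}\clubsuit$ on the pluriharmonic datum exactly cancels the discrepancy between the $\dmd$-action on $A^1$ and the rotation of $(u,v)$).

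Second, I would verify analyticity and compatibility with $\sU$. Analyticity of the $U_1$-action follows because the $\clubsuit$-action on $\fM_{\cH,X,R}$ is analytic (Definition \ref{clubdef} and the remark after it) and the $\dmd$-action on the fixed $\C^\infty$ bundle $\sB$ of Theorem \ref{univmts} is given by polynomial multiplication by $t^{p-q}$ on the $(p,q)$-components, which is manifestly analytic in $t$. The compatibility with the sheaf $\sU=O(U_1)$ under $q: [\fM_{\cH,X,R}/(\langle\tau\rangle\by U_1^{\an})]\by BS\to B(U_1^{\an})\by BS$ is exactly the statement that the combined $S^\delta$-action, when restricted to the image of $\iota$, coincides with the action already built into the sheaf $q^*\sU$; this holds because $\iota(\lambda)=(\bar\lambda/\lambda,\lambda)$ acts on forms as $\bar\lambda/\lambda$ via $\clubsuit$ combined with scaling by $\lambda$ on $C^*$, which is precisely how $\iota S$ acts on $\sU$. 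This assembles to give the desired object
$$
\tilde A_{\MHS}^{\bt}(X,O)\in (q^*\sU)\da DG_{\Z}\Alg_{[\fM_{\cH,X,R}/(\langle\tau\rangle\by U_1^{\an})]\by[C^*/S]}(R).
$$
Finally, we equip $\tilde A_{\MHS}^{\bt}(X,O)$ with the good truncation filtration $W_r=\tau_{\le r}$ (which is $S$-stable since the differential is $S$-equivariant) and set $O(X_{\MHS}^{R,\univ}):=\Rees(\tilde A_{\MHS}^{\bt}(X,O),W)$; by construction the pullback along $\vareps$ forgets the analytic $U_1$-structure and recovers $X_{\MTS}^{R,\univ}$ as the same Rees algebra with only the $\bG_m\subset S$ action retained.

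The main obstacle is the compatibility of the analytic $U_1$-action with the $\iota S$-quotient structure encoded by $\sU$: we have to show that the discrete $U_1$-action defined by combining $\dmd$, $\clubsuit^{-2}$ and rotation of $C^*$ actually extends to an analytic family (not merely a collection of isomorphisms parametrized discretely by $U_1$), and that the resulting $\sU$-module structure agrees with $q^*\sU$ on the nose. Both points reduce to the explicit computation, carried out in Proposition \ref{upsprops}, of how the $\dmd$- and $\star$-actions differ by a $\clubsuit$-twist by $(\alpha+i\beta)/(\alpha-i\beta)$, which specializes on $U_1\subset S$ to the twist by $\bar t/t=t^{-2}$ used in defining $\spadesuit$.
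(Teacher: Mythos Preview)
Your proposal is correct and follows essentially the same approach as the paper: invoke Lemma \ref{univfchar}, enhance the sheaf $\tilde A^{\bt}(X,O)$ from Theorem \ref{univmts} with the analytic $S$-action given by the $\spadesuit$ formula (combining the $\dmd$-action on forms, the $\clubsuit$-action of $t^{-2}$ on $\fM_{\cH,X,R}$, and rotation on $C^*$), and then apply the Rees construction with respect to good truncation. Your write-up is more explicit than the paper's terse proof about what $\spadesuit$ actually does and why the compatibility conditions of Lemma \ref{univfchar} hold, but the strategy is identical.
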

\begin{proof}
We use the criteria of  Lemma \ref{univfchar}. We will enhance the structure of $\tilde{A}^{\bt}(X,O)$ from Theorem \ref{univmts} to give an object of $DG\Alg_{([\fM_{\cH,X,R}/\langle \tau \rangle] \by C^*)/\iota S}(R)$. For $V,W$ as in Lemma \ref{univfchar}, consider 
$$
\Gamma({V \by W}, \tilde{A}^{n}(X, O)),
$$ 
and observe that the  analytic $U_1$-action on $V$ allows us to extend the algebraic $\bG_m$-action of Theorem \ref{univmts} to an analytic $S$-action, given by the  $\spadesuit$ formula. This satisfies the conditions of  Lemma \ref{univfchar}, allowing us to define $X_{\bF}^{R, \univ} \in \Ho(dg_{\Z}\Aff_{[([\fM_{\cH,X,R}/\langle \tau \rangle] \by C^*)/\iota S]}(R))$. Taking the Rees algebra construction gives $X_{\MHS} ^{R, \univ}$.
\end{proof}

\begin{remarks}\label{cfmhs}
\begin{enumerate}
\item
Given a $U_1^{\an}$-equivariant map $[\rho]:\Spec \R \to [\fM_{\cH,X,R}/\langle \tau \rangle]$, we may pull back $X_{\MHS}^{R, \univ}$ to obtain an object 
$$
 [\rho]^* X_{\MHS}^{R, \univ}\in \Ho(dg_{\Z}\Aff_{BU_1^{\an} \by [\bA^1/\bG_m]\by [C^*/S]}(R) \da (\Spec \sU)\by [C^*/S]  ),
$$ 
which is well-defined since $O(X_{\MHS}^{R, \univ})$ is flat and bounded. If $[\rho]$ comes from a Zariski-dense representation $\rho$, then observe that $U_1$-equivariance amounts to giving a  homomorphism $\alpha:U_1^{\an} \to R^C$ such that $t\clubsuit \rho= \ad_{\alpha(t)}\rho$. This gives a $U_1$-action (and hence an $S$-action) on $R$, as in \cite{mhs} Theorem \ref{mhs-mhsmal}. There is then an isomorphism $  R\rtimes S \cong R\by S$, given by  $(r,s) \mapsto (r\alpha(\frac{\bar{s}}{s}), s)$. We may thus regard $X_{\MHS}^{\rho,\mal}$ as an object of 
\begin{eqnarray*}
\Ho(dg_{\Z}\Aff_{\bA^1 \by C^*}(\bG_m \by R\by  S)) &\simeq& \Ho(dg_{\Z}\Aff_{[\bA^1/\bG_m] \by [C^*/S]}(R))\\ &\simeq& \Ho(dg_{\Z}\Aff_{ BU_1 \by[C^*/S]}(R) \da \Spec \sU)\by [C^*/S]).
\end{eqnarray*} 
Pulling back along $BU_1^{\an} \to BU_1$ (i.e. taking the forgetful functor from algebraic to analytic $U_1$-representations), this recovers $ [\rho]^* X_{\MHS}^{R, \univ}$.

\item
For any  map (not necessarily $U_1$-equivariant)  $[\rho]:\Spec \R \to [\fM_{\cH,X,R}/\langle \tau \rangle]$ coming from a Zariski-dense representation $\rho$,  the pullback of $X_{\MHS}^{R, \univ} $ along $[\rho]$ and $1 \in C^*$ gives $X^{\rho,\mal}$, by Theorem \ref{univmts}.
 The action of $U_1\subset S$  then gives us an analytic map
$$
X^{\rho,\mal} \by U_1^{\an} \to X_{\MHS}^{R, \univ}\by_{[([\fM_{\cH,X,R}/\langle \tau \rangle] \by C^*)/\iota S]}[\fM_{\cH,X,R}/\langle \tau \rangle] \by C^*;
$$
this is essentially a generalisation of the analytic $U_1$-action of \cite{mhs} Proposition \ref{mhs-mtsmalenrich}.

\end{enumerate}
\end{remarks}

We also have the following extension of \cite{mhs} Corollary \ref{mhs-formalitysl}, showing that there is an analytic $S$-action on $\ugr X_{\MTS}^{R, \univ}$ (whose induced $\bG_m$-action is algebraic), and a splitting of $ X_{\MHS}^{R, \univ}$ over $\SL_2$.
\begin{proposition}\label{formalityuniv2}
There is a canonical object $\ugr X_{\MHS}^{R, \univ}$  of   
$$
\Ho(dg_{\Z}\Aff_{[\fM_{\cH,X,R}/\langle \tau \rangle\by\iota S]}(R)):=   \Ho(dg_{\Z}\Aff_{[\fM_{\cH,X,R}/(\langle \tau \rangle \by U_1^{\an})] \by BS}(R)\da\Spec(q^*\sU)),
$$ 
with $\vareps^*\ugr X_{\MHS}^{R, \univ}=\ugr X_{\MTS}^{R, \univ} $, for $\vareps$ as in Proposition \ref{univmtsen}.

There is also an isomorphism 
$$
 \row_1^*X_{\MHS}^{R, \univ} \cong \ugr X^{R, \univ}_{\MHS}\by_{[\fM_{\cH,X,R}/\langle \tau \rangle\by \iota S]}  [([\fM_{\cH,X,R}/\langle \tau \rangle] \by \SL_2)/\iota S].
$$ 
extending the isomorphism of Proposition \ref{formalityuniv}.
\end{proposition}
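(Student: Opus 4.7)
The plan is to mirror the construction of Proposition \ref{formalityuniv} at the enriched level described in Lemma \ref{univfchar}, using the principle of two types (Lemma \ref{typesuniv}) to control the formality zigzag in analytic families over $\fM_{\cH,X,R}$.

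First I would define $O(\ugr X_{\MHS}^{R,\univ}):= \H^*(\tilde{A}^\bullet(X,O))$, with differential $u\nabla + v\nabla^c$, as an object of the site of Lemma \ref{univfchar}. The algebraic $\bG_m$-action arises from the cohomological grading (the weight), the analytic $U_1$-action is inherited from the $\spadesuit$-action on $\tilde{A}^\bullet(X,O)$ used in Proposition \ref{univmtsen}, and the $q^*\sU$-module structure descends because $\nabla,\nabla^c$ are $\sU$-linear. The identity $\vareps^*\ugr X_{\MHS}^{R,\univ}=\ugr X_{\MTS}^{R,\univ}$ is then immediate, since both are cohomology of the same complex with the same $\bG_m$-grading, and $\vareps$ simply forgets the analytic $U_1$-structure.

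For the splitting isomorphism, I would reuse the zigzag
\[
\H^*(\tilde{A}^\bullet(X,O)) \xleftarrow{\simeq} (\ker\nabla^c\cap\tilde{A}^\bullet(X,O),\; u\nabla+v\nabla^c) \xrightarrow{\simeq} (\tilde{A}^\bullet(X,O),\; u\nabla+v\nabla^c),
\]
whose arrows are quasi-isomorphisms by Lemma \ref{typesuniv}, as in the proof of Proposition \ref{formalityuniv}. Pulling back along $\row_1:\SL_2\to C^*$ and taking Rees algebras then produces the claimed isomorphism in the homotopy category, and one checks that $\vareps^*$ of this recovers the splitting of Proposition \ref{formalityuniv}. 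The genuinely new point is $\iota S$-equivariance of the middle term: tracking the decomposition $\nabla= d^+ +\theta+\theta^\dagger$ under the $\clubsuit$-action of $t\in U_1$ (which sends $\vartheta\mapsto t\dmd\vartheta$), a direct computation shows that $\nabla^c$ rescales by characters compatible with the $\dmd$-action on $(p,q)$-forms, so $\ker\nabla^c$ is preserved by $\spadesuit$, and the inclusion into $\tilde{A}^\bullet(X,O)$ is $\iota S$-equivariant.

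The main obstacle is analyticity of the $U_1$-action on the cohomology sheaf $\H^*(\tilde{A}^\bullet(X,O))$: a priori the $\spadesuit$-action is only discrete, and one must check that in contractible families $V\to\fM_{\cH,X,R}$ the harmonic representatives (via which the $U_1$-action is realised on $\H^*$) vary analytically in $V$. This reduces to the analytic behaviour of the Green's operator in pluriharmonic families, which is exactly the ingredient underlying Lemma \ref{typesuniv}. Once this is in place, the remaining verifications---compatibility of the splitting with $\vareps^*$, and $\tau$-equivariance---are routine consequences of the corresponding statements for $\tilde{A}^\bullet(X,O)$ proved in Theorem \ref{univmts} and Proposition \ref{univmtsen}.
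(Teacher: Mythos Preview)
Your approach is essentially the same as the paper's, which simply says to adapt Proposition \ref{univmtsen} (with $A^{\bt}(X,O)$ in place of $\tilde{A}^{\bt}(X,O)$) for the construction of $\ugr X_{\MHS}^{R,\univ}$ and then adapt Proposition \ref{formalityuniv} for the splitting. One minor slip: by invoking the site of Lemma \ref{univfchar} you place $\ugr X_{\MHS}^{R,\univ}$ over $[C^*/S]$ rather than over $BS$ as the proposition requires --- the paper's instruction to use $A^{\bt}$ rather than $\tilde{A}^{\bt}$ is precisely what removes the $C^*$ factor, whereas your route would need the extra observation that $\H^*(\tilde{A}^{\bt})$ is constant along $C^*$ in order to descend.
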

\begin{proof}To define $\ugr X_{\MTS}^{R, \univ}$, adapt the proof of Proposition \ref{univmtsen}, replacing $\tilde{A}^{\bt}(X,O)$ with $A^{\bt}(X,O)$. The proof of Proposition \ref{formalityuniv} then adapts to give the splitting isomorphism.
\end{proof}

\bibliographystyle{alphanum}
\addcontentsline{toc}{section}{Bibliography}
\bibliography{references}

\end{document}